\documentclass{article}

     \PassOptionsToPackage{numbers, compress}{natbib}

\usepackage[final]{neurips_2022}
\bibliographystyle{unsrtnat}




\usepackage[utf8]{inputenc} 
\usepackage[T1]{fontenc}    
\usepackage{hyperref}       
\usepackage{url}            
\usepackage{booktabs}       
\usepackage{amsfonts}       
\usepackage{nicefrac}       
\usepackage{microtype}      
\usepackage{xcolor}         


\usepackage{amsmath, amssymb, natbib, graphicx, url, amsthm, subcaption, mathtools}
\usepackage[mathcal]{eucal}
\usepackage{enumitem}
\newcommand{\bmu}{\boldsymbol{\mu}}
\newcommand{\bnu}{\boldsymbol{\nu}}
\newcommand{\bgamma}{\boldsymbol{\gamma}}

\newcommand{\Law}{\mathrm{Law}}
\newcommand{\Fit}{\mathrm{Fit}}

\newcommand{\RR}{\mathbb{R}}
\newcommand{\NN}{\mathbb{N}}

\newcommand{\EE}{\mathbb{E}}

\newcommand{\Cc}{\mathcal{C}}

\newcommand{\Ff}{\mathcal{F}}

\newcommand{\Mm}{\mathcal{M}}
\newcommand{\Nn}{\mathcal{N}}

\newcommand{\Pp}{\mathcal{P}}

\newcommand{\Xx}{\mathcal{X}}

\newcommand{\E}{\mathbf{E}}

\renewcommand{\d}{\mathrm{d}}





\DeclareMathOperator{\vol}{vol}


\newtheorem{theorem}{Theorem}[section]
\newtheorem{proposition}[theorem]{Proposition}
\newtheorem{lemma}[theorem]{Lemma}

\title{Trajectory Inference via \\ Mean-field Langevin in Path Space}

%

\author{
Lénaïc Chizat\thanks{Joint first authors.}\, $^,$\thanks{Institute of mathematics, EPFL, Switzerland} \And
  Stephen Zhang\footnotemark[1]\, $^,$\thanks{University of Melbourne, Victoria, Australia}\And
  Matthieu Heitz\thanks{University of British Columbia, Canada} \And
  Geoffrey Schiebinger\footnotemark[4]
}

\begin{document}

\maketitle

\begin{abstract}
Trajectory inference aims at recovering the dynamics of a population from snapshots of its temporal marginals. To solve this task, a min-entropy estimator relative to the Wiener measure in path space was introduced in~\citet{lavenant2021towards}, and shown to consistently recover the dynamics of a large class of drift-diffusion processes from the solution of an infinite dimensional convex optimization problem. 
In this paper, we introduce a grid-free algorithm to compute this estimator. Our method consists in a family of point clouds (one per snapshot) coupled via Schr\"odinger bridges which evolve with noisy gradient descent. We study the mean-field limit of the dynamics and prove its global convergence to the desired estimator. Overall, this leads to an inference method with end-to-end theoretical guarantees that solves an interpretable model for trajectory inference. We also present how to adapt the method to deal with mass variations, a useful extension when dealing with single cell RNA-sequencing data where cells can branch and die.
\end{abstract}

\section{Introduction}
Trajectory inference aims at recovering the dynamics of a population of particles given  samples from its temporal marginals at various time-points. This problem arises notably in the analysis of single-cell RNA-sequencing data~\citep{schiebinger2019optimal,tong2020trajectorynet,farrell2018single}, where one has access---via a destructive measurement process---to the cell state of samples from a large population of cells that evolve in time. In this application, one would like to recover the overall dynamics of the population as well as the trajectories of individual cells so as to improve our understanding of certain biological processes, such as embryonic development or tumor progression.

Trajectory inference can be cast as a regression problem where the unknown is the law of a continuous stochastic process. 
Let $\Xx$ be the ambient space containing the particles and $\Omega\coloneqq \Cc([0,1],\Xx)$ the path-space, i.e. the set of all possible continuous trajectories over the time interval $[0,1]$.  
We consider the problem of recovering a probability distribution over paths $P\in \Pp(\Omega)$, i.e. the law of a continuous stochastic process $(X_t)_{t\in [0,1]}$. 
For the inference to be well-behaved, one should look for a distribution $P$ such that its time marginals are consistent with the observed snapshots and such that the overall dynamics it represents satisfies some notion of regularity.
 
The problem of trajectory inference has received significant attention over the past several years.
However, while pioneering work has focused on creating new methods, the theoretical treatment has remained limited. 
One class of methods focuses on recovering a potential energy landscape that best fits the observed marginals. For example, Hashimoto \emph{et al.}~\cite{hashimoto2016learning} encode the potential with a neural network, TrajectoryNet~\cite{tong2020trajectorynet} encodes the potential as a neural ODE, and JKOnet~\cite{bunne2021jkonet} encodes the potential with a neural network architecture based on the JKO scheme~\cite{jordan1998variational} and input convex neural networks.
However, these current approaches are all nonconvex, and therefore it can be difficult to establish rigorous guarantees. 

Some guarantees have been established for the equilibrium case by Weinreb \emph{et al.}~\cite{weinreb2018fundamental}, and recently consistency was established for the non-equilibrium case by Lavenant \emph{et al.}~\cite{lavenant2021towards}, who, as a regularity prior, penalize the entropy of $P$ relative to the Wiener measure on $\Omega$ (i.e. the law of the Brownian motion).
Moreover, Bunne \emph{et al.} have recently shown how to produce an improved reference process, beyond ordinary Brownian motion \cite{bunne2022recovering}.

In this work we focus on the estimator introduced by Lavevant \emph{et al.}~\cite{lavenant2021towards}, which reconstructs the process $P$ by minimizing the entropy relative to the Wiener measure over $\Omega$. 
By trading off data fitting with regularization, this approach generalizes the approach of Waddington-OT~\cite{schiebinger2019optimal} to datasets with many time-points yet possibly few samples per time-point, which is required for consistency~\cite{lavenant2021towards}, yields better performance in practice~\cite{schiebinger2021recovering}, and is achievable through single-embryo profiling~\cite{MITTNENZWEIG20212825}.
While the approach of Lavevant \emph{et al.} leads to a consistent estimator, it also leads to a challenging infinite dimensional convex optimization problem. It is tackled in the original paper by discretizing the space and then applying convex optimization methods. The goal of the present paper is to show that the specific structure of this estimator makes it amenable to a newly introduced class of grid-free stochastic methods called \emph{Mean-Field Langevin} (MFL) dynamics~\citep{hu2019mean, mei2018mean}, a non-linear generalization of Langevin dynamics which enjoys quantitative global convergence guarantees~\citep{nitanda2022convex, chizat2022mean}. Instantiated in our context, this method consists in a family of point clouds (one per snapshot) coupled via entropy regularized optimal transport, a.k.a.~Schr\"odinger bridges~\cite{leonard2012schrodinger}, which evolve with noisy gradient descent.
Intuitively, these dynamics can be interpreted as a (non-linear) Langevin diffusion over the path space $\Omega$.
Our approach leads to an inference method with end-to-end theoretical guarantees that solves an interpretable model for trajectory inference. We illustrate on Figure~\ref{fig:cover} the recovered estimator and on Figure~\ref{fig:dynamics} the MFL optimization dynamics.

\paragraph{Organization of the paper} In Section~\ref{sec:estimator}, we present the estimator of~\citet{lavenant2021towards}. The heart of our theoretical contributions is in Section~\ref{sec:algorithm} where we introduce the MFL dynamics and show its quantitative convergence towards the min-entropy estimator at an exponential rate. Numerical experiments are shown in Section~\ref{sec:numerics}.

\paragraph{Notation and blanket assumptions} For two probability measures $\mu,\nu$, their relative entropy is $H(\mu|\nu)=\int \log(\d\mu/\d\nu)\d\mu$ if $\mu \ll \nu$ and $+\infty$ otherwise. For $n\in \mathbb{N}$, let $[n]\coloneqq \{1,\dots,n\}$. Throughout, the ambient space $\Xx$ is a compact convex subset of $\RR^d$ or the $d$-dimensional torus $\mathbb{T}^d$ and $(B_t)_t$ is a Brownian motion. The path space is $\Cc([0,1];\Xx)$ and laws on path space are noted by capital letters, $P$ or $R$. We denote by $P_{t_1,\dots, t_T} = (e_{t_1},\dots,e_{t_T})_\# P$ the marginal of such laws under the (joint) evaluation maps $e_t:\omega \mapsto \omega(t)$. We use boldface letters for families of probability measures $\bmu\in \Pp(\Xx)^T$. 

\begin{figure}
    \centering
    \begin{subfigure}{0.62\linewidth}
        \includegraphics[width = \linewidth]{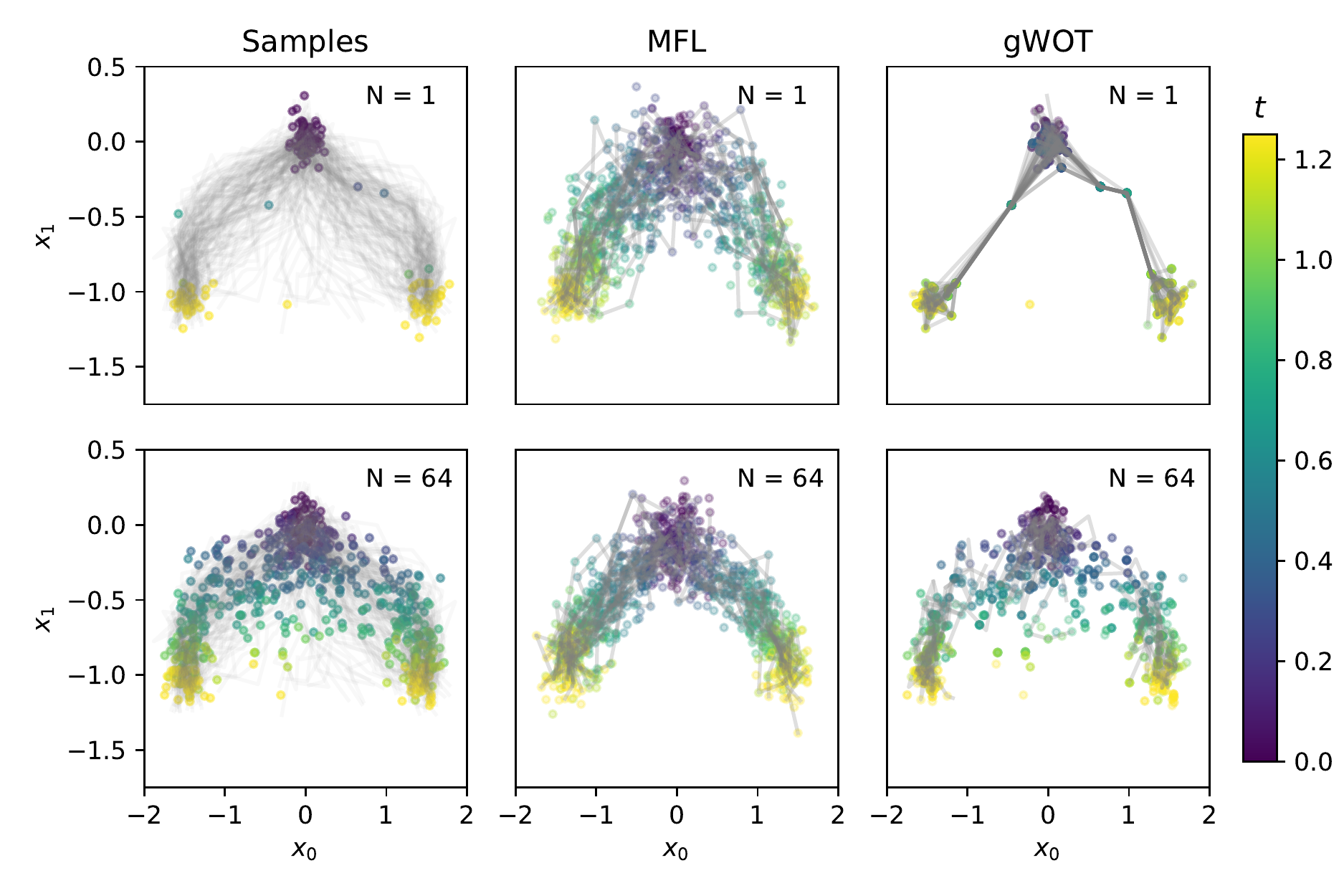}
    \end{subfigure}
    \begin{subfigure}{0.22\linewidth}
        \includegraphics[width = \linewidth]{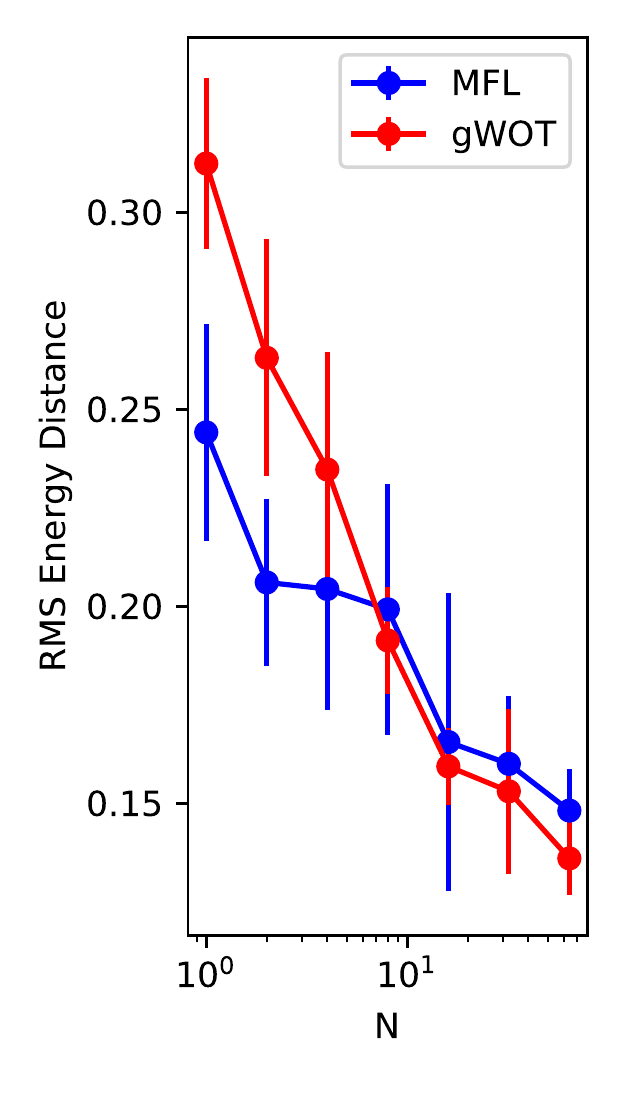}
    \end{subfigure}
    \caption{(left) Sampled time-series data and reconstructions by the proposed Mean-Field Langevin dynamics (MFL) and Global Waddington-OT~\cite{lavenant2021towards} (gWOT). Observed and reconstructed marginals are colored by the measurement time $t_i$, and illustrative sample paths are overlaid in grey. We compare a scenario with limited data at intermediate time-points ($N = 1$) to a uniformly sampled scenario ($N = 64$). (right) RMS Energy Distance to the ground truth over marginals for $N = 2^0, \ldots, 2^6$, illustrating the robustness of MFL to the low-sample regime. See \S\ref{sec:fig1_example} for details.}
    \label{fig:cover}
\end{figure}

\section{Min-Entropy Estimator in Path Space}\label{sec:estimator}

\subsection{Trajectory Inference as Stochastic Process Inference}
\paragraph{Model of population dynamics} The first step of any inference task is to determine a prior on the ground truth. For population dynamics with mass conservation, a natural prior is given by drift-diffusion processes. We thus model the population dynamics as
\begin{equation}\label{eq:SDE}
    \d X_t = -\nabla \Psi(t,X_t) \d t +\sqrt{\tau} \d B_t,\qquad \Law(X_0) = \mu_0,\qquad t\in [0,1],
\end{equation}
where $\tau>0$ is the temperature/diffusivity, assumed known, and $\Psi\in \Cc^2([0,1]\times \Xx)$ is unknown (in case $\Xx$ has boundaries, one should also introduce a reflection term, that we ignore in this section). As discussed in prior works~\citep{hashimoto2016learning, lavenant2021towards}, it is hopeless to recover the divergence-free component of the drift: it is thus natural to assume that the drift is given by the gradient of a function $\Psi$, called the \emph{Waddington potential} or \emph{epigenetic landscape} in the context of cell development. The noise level $\tau>0$ models the inherent randomness of the ground truth process. It turns out that this noise also has a favorable effect on the algorithm we develop here. This model can be extended to allow particles to branch and die~\citep{baradat2021regularized}, this extension is discussed in Section~\ref{sec:mass-variation}.

\paragraph{Model of measurements} Let $0 \leq t_1<\dots <t_T\leq 1$ be a family of measurement times.
In contrast to the classical field of inference for stochastic processes~\citep{rao2000stochastic} where each realization $(X_t)_{t\in [0,1]}$ is observed $T$ times (with $T$ large), in trajectory inference each realization is only observed once, i.e.~we observe $(X_{t_i,j})_{i\in [T], j\in [n_i]}$ with independence for each couple $(i,j)$. The measurements are summarized by the family of snapshots $\hat \mu_{t_i} = \frac{1}{n_i}\sum_{j=1}^{n_i} \delta_{X_{t_i,j}} \in \Pp(\Xx)$ for $i\in [T]$, where $n_i\geq 1$ is the number of particles observed at time $t_i$.

\paragraph{Inference problem} The general goal is to recover the law $P=\Law(X)\in \Pp(\Omega)$ of the stochastic process given the snapshots $(\hat \mu_{t_1},\dots, \hat \mu_{t_T})$. This object $P$ contains all the information about $X$: in particular it contains the marginals $P_t$ of the process and the transition probability between each family of marginals $P_{t_1,\dots,t_T}$. Note that, as discussed in the introduction, some works~\citep{tong2020trajectorynet,hashimoto2016learning,bunne2021jkonet} focus on directly  recovering the Waddington potential $\Psi$ within a parameterized class of functions. This is a different problem, with encouraging empirical results but weaker known theoretical guarantees.

\subsection{Min-Entropy Estimator}
Let $W^\tau \in \Pp(\Omega)$ be the law of the (reflected) Brownian motion on $\Xx$ with temperature $\tau$ and uniform initialization. The min-entropy estimator\footnote{We propose this name because $R^*$ is the probability measure in path space with the smallest entropy relative to the Wiener measure among all those that fit at least equally well the observations.} introduced in~\cite{lavenant2021towards}, is defined as the unique minimizer $R^*$ of the functional $\Ff:\Pp(\Omega)\to \RR$ defined as
\begin{align}\label{eq:estimator}
\Ff(R) \coloneqq \frac{1}{\lambda} \sum_{i=1}^T \Delta t_{i}\mathrm{Fit}_\sigma(R_{t_i}|\hat \mu_{t_i}) + \tau H(R|W^{\tau})
\end{align}
where $\lambda>0$ is the regularization strength, $\Delta t_i \coloneqq (t_{i+1}-t_{i-1})/2$ (with the convention $t_{-1}=0$ and $t_{T+1}=1$) and $\mathrm{Fit}_\sigma$ is a divergence functional that quantifies how much $R_{t_i}$ and $\hat \mu_{t_i}$ differ, parameterized by a bandwidth $\sigma>0$. For a particular choice of fitting functional (see below), the authors prove the following result:
\begin{theorem}[Consistency,~\cite{lavenant2021towards}]\label{thm:consistency}
If $(t_i)_{i\in [T]}$ becomes dense in $[0,1]$ as $T$ grows, then
\[
\lim_{\lambda, \sigma \to 0}\lim_{T\to \infty} R^* = P \qquad \text{weakly, almost surely.}
\]
\end{theorem}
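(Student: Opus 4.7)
The plan is to prove Theorem~\ref{thm:consistency} by a two-stage $\Gamma$-convergence argument: first pass to the limit $T \to \infty$ with $(\lambda,\sigma)$ fixed to obtain a continuum-in-time variational problem, then pass to $(\lambda,\sigma) \to 0$ to recover $P$. Three ingredients will be used throughout: (i)~Girsanov's formula to exhibit $P$ as a finite-energy competitor, (ii)~a weighted-Riemann-sum / Glivenko-Cantelli-type argument to replace the empirical snapshots by the true marginals $P_{t_i}$, and (iii)~a Helmholtz-type uniqueness statement: among all path laws with prescribed time-marginals, the one whose drift is a pure gradient minimizes $H(\cdot|W^\tau)$.

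\textbf{Step 1: finite-energy competitor.} Since $\Psi \in \Cc^2([0,1]\times \Xx)$ and $\Xx$ is compact, $\nabla\Psi$ is bounded, so Girsanov's theorem yields
\begin{equation*}
  H(P|W^\tau) = \frac{1}{2\tau}\,\EE_P\!\!\int_0^1 |\nabla\Psi(t,X_t)|^2\dd t < \infty.
\end{equation*}
Plugging $P$ into $\Ff$ as a trial path law then gives a uniform bound on $\Ff(R^*)$, and in particular a uniform entropy bound $H(R^*|W^\tau) \le \Ff(P)/\tau$. This controls tightness of $R^*$ in $\Pp(\Omega)$ via the well-known coercivity of $H(\cdot|W^\tau)$ (sublevels are weakly compact).

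\textbf{Step 2: sending $T\to\infty$ with $\lambda,\sigma$ fixed.} The data-fitting term $\tfrac{1}{\lambda}\sum_i \Delta t_i\,\Fit_\sigma(R_{t_i}|\hat\mu_{t_i})$ is a weighted Riemann sum in time of the functional $t\mapsto \Fit_\sigma(R_t|\hat\mu_t)$. I would show that, almost surely and jointly in $R$, this sum $\Gamma$-converges to $\tfrac{1}{\lambda}\int_0^1 \Fit_\sigma(R_t|P_t)\dd t$. The proof combines two facts: (a)~continuity of $t\mapsto R_t$ and $t\mapsto P_t$ (which follows from diffusive regularity) to handle the Riemann approximation, and (b)~a strong law of large numbers stating that, for fixed bandwidth $\sigma$, the empirical snapshot $\hat\mu_{t_i}$ contributes only an $O(1/n_i)$ bias in $\Fit_\sigma$ which, weighted by $\Delta t_i$ and summed, averages out in the limit. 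Coupled with the entropy coercivity from Step~1, the fundamental theorem of $\Gamma$-convergence yields that $R^*$ converges weakly a.s.\ to the unique minimizer $R^*_{\lambda,\sigma}$ of $\Ff_\infty(R) := \tfrac{1}{\lambda}\int_0^1 \Fit_\sigma(R_t|P_t)\dd t + \tau H(R|W^\tau)$.

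\textbf{Step 3: sending $\lambda,\sigma\to 0$.} Using $P$ as competitor in $\Ff_\infty$ again gives $H(R^*_{\lambda,\sigma}|W^\tau)\le H(P|W^\tau)$, hence tightness and a cluster point $R^\infty$. The bound $\Ff_\infty(R^*_{\lambda,\sigma}) = O(1)$ forces $\int_0^1 \Fit_\sigma(R^*_{\lambda,\sigma,t}|P_t)\dd t = O(\lambda)\to 0$, and lower semicontinuity of $\Fit_0$ (with $\sigma\to 0$) together with time-continuity of the marginals forces $R^\infty_t = P_t$ for every $t\in[0,1]$. It remains to identify $R^\infty$ among all path measures with marginals $(P_t)_t$: by $H(\cdot|W^\tau)$-lower semicontinuity and the competitor bound, $R^\infty$ minimizes $H(\cdot|W^\tau)$ subject to these marginal constraints. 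Any such minimizer solves an SDE with diffusion $\sqrt{\tau}$ and some drift $b$; writing $b = \nabla\phi + v$ with $v$ divergence-free w.r.t.\ $P_t$, the entropy is strictly reduced by setting $v=0$, and the unique gradient drift consistent with the Fokker--Planck equation for $(P_t)$ is $-\nabla\Psi$. Hence $R^\infty = P$, giving weak a.s.\ convergence.

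\textbf{Main obstacle.} The hard part is Step~2: the number of samples per time point is typically small (even $n_i=1$ is allowed), so each $\hat\mu_{t_i}$ is individually a very noisy estimate of $P_{t_i}$, and one cannot hope for any pointwise-in-time convergence. Consistency only emerges through the time-integrated quantity, which requires a joint concentration argument in $(R,t,\omega)$, uniform over the (non-compact) set of candidate $R$, while simultaneously exploiting entropic coercivity to restrict to sufficiently regular competitors. Balancing these uniformity requirements against the low-sample regime is the core technical difficulty.
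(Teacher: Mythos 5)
A preliminary point: the paper you are reading does not prove Thm.~\ref{thm:consistency} at all --- it is imported verbatim from~\cite{lavenant2021towards}, and the authors explicitly caution that it was established there for the data-fitting functional $H(g_\sigma \ast \hat\mu_t|R_t)$ and for $\Xx$ a compact manifold without boundary, while the statistical analysis for the present paper's choice of $\Fit_\sigma$ is expressly left to future work. So your proposal can only be measured against the proof in the cited reference, whose architecture it in fact reproduces quite faithfully: Girsanov's formula to exhibit $P$ as a finite-entropy competitor, entropy coercivity for tightness, the double limit $T\to\infty$ followed by $(\lambda,\sigma)\to 0$ via $\Gamma$-convergence, and the identification step that the gradient-drift law is the \emph{unique} minimizer of $H(\cdot|W^\tau)$ among path laws with marginals $(P_t)_{t\in[0,1]}$ (Helmholtz-type orthogonality of gradient and $P_t$-divergence-free drifts in $L^2(P_t)$, after reducing path-dependent drifts to Markovian ones by conditioning). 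Your Steps~1 and~3 are essentially the right ideas, modulo the usual care in Step~3 that a finite-entropy cluster point admits an adapted drift representation at all.

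The genuine gap is in Step~2, and you half-diagnose it yourself. You claim each $\hat\mu_{t_i}$ contributes an ``$O(1/n_i)$ bias in $\Fit_\sigma$ which, weighted by $\Delta t_i$ and summed, averages out.'' Biases do not average out: if $\EE\big[\Fit_\sigma(R_{t_i}|\hat\mu_{t_i})\big]-\Fit_\sigma(R_{t_i}|P_{t_i})$ were genuinely of order $1/n_i$ with $n_i=1$ fixed --- which you allow --- the systematic error would be $O(1)$ per time point and would survive the limit $T\to\infty$; summing over independent time points kills fluctuations (variance), never bias. The mechanism that actually makes the low-sample regime work --- and the reason the paper insists that $\Fit_\sigma$ be \emph{linear} in $\hat\mu_t$ --- is that $\Fit_\sigma(R_t|\cdot)$ is linear in the empirical measure, so
\[
\EE\big[\Fit_\sigma(R_{t_i}|\hat\mu_{t_i})\big] = \Fit_\sigma(R_{t_i}|P_{t_i})
\]
holds \emph{exactly}, for every $n_i\geq 1$: the per-snapshot estimator is unbiased, and the $T\to\infty$ limit is a law of large numbers across independent time points applied to zero-mean fluctuations. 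Uniformity over the competitor $R$ is then obtained not by a generic ``joint concentration in $(R,t,\omega)$'' but by equicontinuity: finite $H(R|W^\tau)$ yields a uniform H\"older-in-time modulus for the marginals $t\mapsto R_t$, so the supremum over an entropy sublevel set reduces to a compact family on which the LLN can be made uniform. Without replacing your bias-cancellation heuristic by this unbiasedness-plus-uniform-LLN argument, Step~2 --- which you correctly identify as the crux --- does not go through.
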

In this paper, we consider the following data fitting term
\begin{align}\label{eq:fit}
\mathrm{Fit}_\sigma(R_t,\hat \mu_t) 
&\coloneqq \int -\log \left[ \int \exp\Big( -\frac{\Vert x-y\Vert^2}{2\sigma^2} \Big) \d R_t(x) )\right]\d\hat \mu_t(y)\\
& = H(\hat \mu_t|R_t \ast g_{\sigma}) + H(\hat \mu_t) + C\label{eq:not-always-defined}
\end{align}
where $g_{\sigma}(x) = (2\pi\sigma^2)^{-\frac{d}{2}}e^{-\Vert x\Vert^2/(2\sigma^2)}$ is the Gaussian density and $C>0$ is a constant. Eq.~\eqref{eq:fit} defines $\Fit_\sigma$ and is always well-defined and finite, but Eq.~\eqref{eq:not-always-defined} only makes sense when $H(\hat \mu_t)<\infty$.
This is the negative log-likelihood under the noisy observation model $\hat X_{t_i,j}=X_{t_i,j}+\sigma Z_{i,j}$ where $\hat X_{t_i,j}$ is the observation and $Z_{i,j}\overset{iid}{\sim} \Nn(0,I)$ the noise. Intuitively, when minimizing this \emph{soft-min} objective over measures $R_t$ which are mixtures of Dirac masses, each of the observed particles from $\hat \mu_{t}$ tends to attract the particles from $R_t$ that are the closest---even if they are far away in an absolute sense---but barely influences those that are the farthest.

This data-fitting functional $\Fit_\sigma$ slightly differs from $H(g_\sigma \ast \hat \mu_t|R_t)$, which is the one for which Thm.~\ref{thm:consistency} has been established in~\cite{lavenant2021towards}, but it preserves its key properties, namely joint convexity in $(R_t,\hat \mu_t)$ and linearity in $\hat \mu_t$. Our choice for $\Fit_\sigma$ is motivated by a more favorable behavior when $R_t$ is a discrete measure and by its natural statistical interpretation. In this paper, we focus on the optimization aspects and leave the statistical analysis of the estimator $R^*$ with this specific choice of $\mathrm{Fit}_\sigma$ for future work. Let us also mention that Thm.~\ref{thm:consistency} has been established for $\Xx$ a compact manifold without boundary, while under our assumptions $\Xx$ may have boundaries. 

We now turn to the presentation of the algorithm. The data model of Eq.~\eqref{eq:SDE} was introduced only to motivate the min-entropy estimator via Thm.~\ref{thm:consistency}, and plays no role in the rest of the paper.

\section{Optimization Method: Mean Field Langevin Dynamics}\label{sec:algorithm}
In order to compute the min-entropy estimator $R^*\in \Pp(\Omega)$, one needs to minimize an entropy regularized functional over the space of paths $\Ff:\Pp(\Omega)\to \RR$ which is of the form
 \begin{align}\label{eq:path-objective}
 \Ff(R) = \mathrm{Fit}(R_{t_1},\dots,R_{t_T}) +  \tau H(R|W^{\tau})
 \end{align}
where we have posed $\Fit(R_{t_1},\dots,R_{t_T}):=\frac{1}{\lambda} \sum_{i=1}^T \Delta t_{i}\mathrm{Fit}_\sigma(R_{t_i}|\hat \mu_{t_i})$. This is the sum of a convex functional $\Fit$ and the relative entropy with respect to $W^\tau$. If, instead of $\Pp(\Omega)$, the optimization space was $\Pp(\Xx)$, a problem with this structure could be solved by Mean-Field Langevin (MFL) dynamics. More explicitly, MFL dynamics are designed to minimize problems of the form $F(\mu) = G(\mu)+\tau H(\mu)$ where $G:\Pp(\Xx)\to \RR$ is ``smooth'' and $H$ is minus the differential entropy. They enjoy global convergence guarantees when $G$ is convex. Considering for some $m\in \NN^*$ the noisy gradient descent on the function $(x_1,\dots,x_m) \mapsto G(\frac1m \sum_{i=1}^m \delta_{x_i})$,  MFL dynamics are obtained in the mean-field $m\to \infty$ and vanishing step-size limit, see Section~\ref{sec:MFL} for details.

Inspired by this parallel, we now design a drift-diffusion dynamics in path space that converges to the minimizer of $\Ff$. The main idea is a reformulation of the problem as a ``reduced'' problem over $\Pp(\Xx)^T$ with a structure amenable to MFL dynamics.

\subsection{Reduced Formulation}
 We first introduce the reduced functional $F$, and then state its connection to $\Ff$ in Thm.~\ref{thm:representer}.

For $\mu,\nu \in \Pp(\Xx)$, let $\Pi(\mu,\nu)$ be the set of transport plans between $\mu$ and $\nu$, that is, probability measures on $\Xx\times \Xx$ with respective marginals $\mu$ and $\nu$. The entropy regularized optimal transport cost between $\mu$ and $\nu$ is defined, for some $\tau_i>0$, as
\begin{align}\label{eq:ROT}
    T_{\tau_i}(\mu,\nu) &\coloneqq \min_{\gamma \in \Pi(\mu,\nu)} \int c_{\tau_i}(x,y) \d\gamma(x,y) + \tau_i H(\gamma|\mu\otimes \nu) = \min_{\gamma \in \Pi(\mu,\nu)} \tau_i H(\gamma|p_{\tau_i}\mu\otimes \nu).
\end{align}
where $p_t(x,y)$ is the transition probability density of the (reflected) Brownian motion on $\Xx$ over the time interval $[0,t]$ -- or equivalently the heat kernel with no-flux boundary conditions -- and $c_{\tau_i}(x,y)\coloneqq -\tau_i \log(p_{\tau_i}(x,y))$. In numerical experiments we use the approximation suggested by Varadhan's formula when $\tau_i$ is small: $c_{\tau_i}(x,y)\approx \frac12 \Vert y-x\Vert^2$ (up to an additive constant which is irrelevant when one is interested in minimizers only)~\cite{norris1997heat}.
This optimization problem is also known as the \emph{Schr\"odinger bridge problem}: its solution gives the most likely evolution of a cloud of particles following a Brownian motion, conditioned on being distributed as $\mu$ at $t=0$ and $\nu$ at $t=1$~\citep{leonard2012schrodinger}. Some background on this problem is given in Appendix~\ref{app:EOT}, including the definition of the Schr\"odinger potentials $(\varphi,\psi)$ used hereafter. Consider the function $G:\Pp(\Xx)^T\to \RR$ defined for $\bmu = (\bmu^{(1)},\dots,\bmu^{(T)})$ that represents a family of $T$ reconstructed temporal marginals, by
\begin{align}\label{eq:G-functional}
    G(\bmu) \coloneqq \mathrm{Fit}(\bmu) +  \sum_{i=1}^{T-1} \frac{1}{t_{i+1}-t_i} T_{\tau_i}(\bmu^{(i)},\bmu^{(i+1)})
\end{align}
where $\tau_i \coloneqq (t_{i+1}-t_i)\tau$.
 We now introduce the \emph{reduced objective} $F:\Pp(\Xx)^T\to \RR$, defined as 
\begin{align}\label{eq:reduced-objective}
F(\bmu) \coloneqq G(\bmu) + \tau H(\bmu)
\end{align}
where $H(\bmu)=\sum_{i=1}^T \int \log(\bmu^{(i)}(x))\d\bmu^{(i)}(x)$ is minus the differential entropy of the family of measures $\bmu$. 

The next result makes the link between minimizing $\Ff$, the objective in path space~\eqref{eq:path-objective}, and $F$, the reduced objective~\eqref{eq:reduced-objective}. It can be interpreted as a Representer Theorem for the min-entropy estimator.\footnote{We stretch a bit the term ``representer theorem'' which is usually reserved to finite-dimensional reductions.} This theorem is straightforward to deduce from~\cite[Cor.~3.5]{benamou2019entropy}, but the specific form of $F$ that we use here is central for our subsequent algorithmic developments.\footnote{The key difference with~\cite[Prop.~B.1]{lavenant2021towards} is that here $T_\tau$ involves the entropy of $\gamma$ relative to the product measure (instead of the volume measure), which makes $H(\bmu)$ appear with a \emph{positive} sign in $F$ (instead of negative).}

\begin{theorem}[Representer Theorem]\label{thm:representer}
Let $\mathrm{Fit}:\Pp(\Xx)^T\to \RR$ be \emph{any} function.\begin{enumerate}[label=(\roman*)]
    \item If $\Ff$ admits a minimizer $R^*$ then $(R^*_{t_1},\dots,R^*_{t_T})$ is a minimizer for $F$.
    \item Conversely, if $F$ admits a minimizer $\bmu^* \in \Pp(\Xx)^T$ then a minimizer $R^*$ for $\Ff$ is built as
\[
R^*(\cdot) = \int_{\Xx^T} W^\tau(\cdot| x_1,\dots,x_T )\d R_{t_1,\dots,t_T}(x_1,\dots,x_T)
\]
 where $W^\tau(\cdot|x_1,\dots,x_T)$ is the law of $W^\tau$ conditioned on passing through $x_1,\dots,x_T$ at times $t_1,\dots,t_T$ respectively and $R_{t_1,\dots,t_T}$ is the composition of the transport plans $\gamma_{i,i+1}$ which are optimal in the definition of $T_{\tau_i}(\bmu^{*(i)},\bmu^{*(i+1)})$, for $i=1,\dots,T-1$. 
\end{enumerate} 
\end{theorem}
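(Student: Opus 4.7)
The plan is to disintegrate measures on path space over their values at the measurement times $t_1,\dots,t_T$ and exploit the Markov property of $W^\tau$ to reduce the problem on $\Pp(\Omega)$ to a multi-marginal problem on $\Xx^T$, which in turn factors into pairwise Schr\"odinger bridges. This two-step reduction is morally Corollary~3.5 of~\cite{benamou2019entropy} applied to the Markov reference $W^\tau_{t_1,\dots,t_T}$; what takes a little care is isolating $\tau H(\bmu)$ with the correct sign, which is the point emphasized in the footnote.

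First, for any $R\in\Pp(\Omega)$ with $H(R|W^\tau)<\infty$, set $\pi:=R_{t_1,\dots,t_T}$ and write $R(\cdot)=\int R(\cdot\mid x_1,\dots,x_T)\,\d\pi(x_1,\dots,x_T)$. The chain rule for relative entropy gives
\[
H(R|W^\tau)=H(\pi\mid W^\tau_{t_1,\dots,t_T}) + \int H\bigl(R(\cdot\mid x_{1:T})\,\big|\,W^\tau(\cdot\mid x_{1:T})\bigr)\,\d\pi(x_{1:T}).
\]
Since $\Fit$ depends on $R$ only through the one-dimensional marginals $R_{t_i}=\pi_i$, the functional $\Ff(R)$ at fixed $\pi$ is minimized precisely by the conditional law $R(\cdot\mid x_{1:T})=W^\tau(\cdot\mid x_{1:T})$ (the concatenated Brownian bridge), which zeros out the second term. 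This already yields the construction in (ii) and reduces the problem to $\min_{\pi\in\Pp(\Xx^T)}\ \Fit(\pi_1,\dots,\pi_T)+\tau H(\pi\mid W^\tau_{t_1,\dots,t_T})$.

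Second, by the Markov property $W^\tau_{t_1,\dots,t_T}(\d x_{1:T})=\rho_\Xx(x_1)\prod_{i=1}^{T-1}p_{\tau_i}(x_i,x_{i+1})\,\d x_{1:T}$ with $\rho_\Xx$ uniform, so the cross term $-\int \log W^\tau_{t_1,\dots,t_T}\,\d\pi$ depends on $\pi$ only through its two-point marginals $\gamma_{i,i+1}$. The classical sub-additivity inequality $H(\pi)\geq \sum_{i=1}^{T-1}H(\gamma_{i,i+1})-\sum_{i=2}^{T-1}H(\pi_i)$, with equality iff $\pi$ is Markov, then shows that the optimum with prescribed one-dimensional marginals $\pi_i=\bmu^{(i)}$ is attained by the Markov $\pi$ with those two-marginals. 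Using $\tau_i=(t_{i+1}-t_i)\tau$ and the identity $H(\gamma\mid p_{\tau_i}\bmu^{(i)}\otimes\bmu^{(i+1)}) = \int \log(\gamma/p_{\tau_i})\d\gamma - H(\bmu^{(i)}) - H(\bmu^{(i+1)})$, a straightforward rearrangement gives, for such Markov $\pi$,
\[
\tau H(\pi\mid W^\tau_{t_1,\dots,t_T}) = \tau\log\vol(\Xx)+\tau H(\bmu) + \sum_{i=1}^{T-1}\frac{\tau_i\,H(\gamma_{i,i+1}\mid p_{\tau_i}\bmu^{(i)}\otimes\bmu^{(i+1)})}{t_{i+1}-t_i}.
\]
Minimizing each summand over $\gamma_{i,i+1}\in\Pi(\bmu^{(i)},\bmu^{(i+1)})$ produces $T_{\tau_i}(\bmu^{(i)},\bmu^{(i+1)})/(t_{i+1}-t_i)$ by definition~\eqref{eq:ROT}, and adding $\Fit(\bmu)$ recovers exactly $F(\bmu)+\tau\log\vol(\Xx)$.

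Combining, $\min_R \Ff(R)=\min_{\bmu} F(\bmu)+\tau\log\vol(\Xx)$: the marginals of any $\Ff$-minimizer realize the minimum of $F$, giving (i); conversely, starting from an $F$-minimizer $\bmu^*$ and the optimal pairwise plans $\gamma^*_{i,i+1}$, one assembles via the Markov property an optimal $\pi^*\in\Pp(\Xx^T)$, and then lifts to an $\Ff$-minimizer $R^*$ by concatenating Brownian bridges as in (ii). The main technical point to pin down rigorously will be the disintegration with Brownian-bridge conditionals when $\Xx$ has a boundary (i.e., the reflected case), as well as the equality case of the sub-additivity bound; both are classical facts about Gaussian/heat kernels and Markov measures, and nothing else in the argument depends on whether $\Xx$ is the torus or a compact convex domain.
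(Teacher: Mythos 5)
Your proposal is correct and follows essentially the same route as the paper's proof: the chain rule for relative entropy with Brownian-bridge conditionals, a Markov/pairwise decomposition with equality iff Markovian, and the change of reference measure from $W^\tau_{t_i,t_{i+1}}$ to $p_{\tau_i}\,\bmu^{(i)}\otimes\bmu^{(i+1)}$ that makes $\tau H(\bmu)$ appear with a positive sign. The only (cosmetic) difference is that where the paper invokes the relative-entropy inequality of \cite[Sec.~3.4]{benamou2019entropy} directly, you rederive it from entropy sub-additivity via the explicit Markov factorization $W^\tau_{t_1,\dots,t_T}(\d x_{1:T})=V_\Xx^{-1}\prod_i p_{\tau_i}(x_i,x_{i+1})\,\d x_{1:T}$, which yields the same inequality and equality case.
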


The composition of the transport plans is obtained as
\[
R_{t_1,\dots,t_T}(\d x_1,\dots, \d x_T) = \gamma_{1,2}(\d x_1,\d x_2) \gamma_{2,3}(\d x_3|x_2) \dots \gamma_{T-1,T}(\d x_{T}|x_{T-1})
\]
where we have introduced the conditional probability (a.k.a.~``disintegrations'') characterized by $\gamma_{i,i+1}(\d x_{i},\d x_{i+1}) = \gamma_{i,i+1}(\d x_{i+1}|x_i)\mu_i(\d x_i)$. In probabilistic terms, the equality in $(ii)$ can be understood as saying that conditional on passing through $(x_1,\dots, x_T)$ at times $(t_1,\dots,t_T)$, the paths of $R^*$ are Brownian bridges with diffusivity $\tau$. The proof of Theorem~\ref{thm:representer} is given in Appendix~\ref{app:proof-representer}. 

The importance of the reduced problem comes from the following facts:
\begin{itemize}
    \item The optimization space has been ``reduced'' from $\Pp(\Omega)$ to $\Pp(\Xx)^T$. This reduction is enabled by the Markovian property of $W^\tau$. Moreover, Theorem~\ref{thm:representer} gives an explicit method to construct a minimizer for $\Ff$ from a minimizer for $F$ and the associated optimal transport plans $(\gamma_{i,i+1})_{i\in [T-1]}$. This fact was already exploited in~\citet{lavenant2021towards}. 
    \item The objective function $F$ is the sum of two terms: a ``smooth'' function $G$ and a  differential entropy term $\tau H$. This is precisely the structure tackled by MFL dynamics. Observe how the entropy in path space $H(P|W^\tau)$ is split into two parts: the Schr\"odinger bridges $T_\tau$, included in the ``smooth'' term $G$, and minus the differential entropy $H$.
\end{itemize}

Let us now describe some useful properties of $G$ and $F$. Hereafter, the \emph{first-variation} of $G:\Pp(\Xx)^T\to \RR$ at $\bmu$ is the unique (up to an additive constant) function $V[\bmu]\in \Cc(\Xx)^T$ such that for all $\bnu \in \Pp(\Xx)^T$,
\begin{align}\label{eq:first-variation}
\lim_{\epsilon \downarrow 0} \frac1\epsilon \big(G((1-\epsilon)\bmu+\epsilon \bnu) -G(\bmu)\big) = \sum_{i=1}^T \int V^{(i)}[\bmu](x) \d(\bnu-\bmu)^{(i)}(x).
\end{align}

\begin{proposition}\label{prop:first-properties}
The function $G$ is convex separately in each of its input (but not jointly), weakly continuous and its \emph{first-variation} is given for $\bmu\in \Pp(\Xx)^T$ and $i\in [T]$ by
\begin{align*}
    V^{(i)}[\bmu] = \frac{\delta \mathrm{Fit}}{\delta \bmu^{(i)}}[\bmu]+\frac{\varphi_{i,i+1}}{t_{i+1}-t_i} + \frac{\psi_{i,i-1}}{t_{i}-t_{i-1}},\quad \frac{\delta \mathrm{Fit}}{\delta \bmu^{(i)}}[\bmu]:x \mapsto  - \frac{\Delta t_i}{\lambda} \int \frac{g_\sigma(x-y)}{(g_{\sigma}\ast \bmu^{(i)})(y)}\d\hat \mu_{t_i}(y)
\end{align*}
where $(\varphi_{i,j},\psi_{i,j})\in \Cc^\infty(\Xx)$ are the Schr\"odinger potentials for $T_{\tau_i}(\bmu^{(i)},\bmu^{(j)})$, with the convention that the corresponding term vanishes when it involves $\psi_{1,0}$ or $\varphi_{T,T+1}$.
The function $F$ is jointly convex and admits a unique minimizer $\bmu^*$, which has an absolutely continuous density (again denoted by $\bmu^*$) characterized by
\[
(\bmu^*)^{(i)} \propto e^{-V^{(i)}[\bmu^*]/\tau}, \quad \text{for $i\in [T]$}.
\]
\end{proposition}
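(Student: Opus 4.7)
I would verify the claims in the order stated, exploiting the additive structure of $G$ and then combining with the entropy to deduce the properties of $F$.

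\emph{Properties of $G$.} Each summand can be treated separately. For $\mathrm{Fit}_\sigma(\bmu^{(i)},\hat\mu_{t_i})$, rewriting it as $-\int \log(g_\sigma * \bmu^{(i)})(y)\,\d\hat\mu_{t_i}(y) + \mathrm{const}$ shows it is convex in $\bmu^{(i)}$ (since $\bmu \mapsto g_\sigma * \bmu$ is linear and strictly positive while $-\log$ is convex on $(0,\infty)$), weakly continuous (since $g_\sigma$ is bounded continuous), and its first variation follows by direct differentiation, yielding the claimed integral expression. For $T_{\tau_i}$, separate convexity in each argument and weak continuity are classical for entropic optimal transport with continuous cost on a compact space (Appendix~\ref{app:EOT}). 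The first variations with respect to the two marginals are the Schrödinger potentials $\varphi_{i,i+1}$ and $\psi_{i,i+1}$; I would derive this by writing the dual of $T_{\tau_i}$ as a smooth concave maximization over potentials and invoking the envelope theorem (uniqueness of the dual maximizer up to additive constants matches the ambiguity in the first variation). Summing contributions from the two bridges adjacent to $t_i$, weighted by $1/(t_{i+1}-t_i)$ and $1/(t_i-t_{i-1})$, gives $V^{(i)}[\bmu]$.

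\emph{Joint convexity of $F$.} This is the main difficulty and cannot be read off by summing. Decomposing $H(\gamma|\mu\otimes\nu) = H(\gamma) - H(\mu) - H(\nu)$ shows that $T_{\tau_i}$ is the sum of a jointly convex term and a jointly concave term in $(\mu,\nu)$, and the concave contributions from internal marginals $\bmu^{(i)}$ are not fully cancelled by $\tau H(\bmu)$. The clean route is via path space: using the Markov property of $W^\tau$ together with the multi-marginal Schrödinger decomposition that underlies Theorem~\ref{thm:representer}, one can establish the identity
\[ F(\bmu) = \mathrm{Fit}(\bmu) + \tau \inf\bigl\{ H\bigl(\rho\,\big|\,W^\tau_{t_1,\dots,t_T}\bigr) : \rho \in \Pp(\Xx^T),\ \rho_i = \bmu^{(i)}\ \forall i \bigr\} + \mathrm{const}, \]
the infimum being attained at the Markov composition of the pairwise bridges in the definition of $T_{\tau_i}$. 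The inner functional is jointly convex in $(\rho,\bmu)$ (relative entropy is jointly convex, marginal constraints are linear), so its partial infimum over $\rho$ is jointly convex in $\bmu$; adding the jointly convex $\mathrm{Fit}$ concludes.

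\emph{Existence, uniqueness, and characterization.} Since $\Pp(\Xx)^T$ is weakly compact, $G$ is weakly continuous, and $\bmu\mapsto H(\bmu)$ is weakly lower semicontinuous, $F$ attains its infimum. Strict convexity of $H$ on its effective domain combined with convexity of $G$ yields strict convexity of $F$, hence uniqueness of $\bmu^*$. For the exponential characterization, I would use the first-order condition: writing $\delta F/\delta \bmu^{(i)} = V^{(i)}[\bmu^*] + \tau(1+\log \bmu^{(i)*})$ (valid once $\bmu^{(i)*}$ is a.c.) and noting that at a minimizer this quantity must be constant on the support gives the stated identity. Conversely, any $\bmu$ with $\bmu^{(i)} \propto e^{-V^{(i)}[\bmu]/\tau}$ has full support and a strictly positive density (since $V^{(i)}[\bmu]$ is bounded continuous on the compact $\Xx$, the Schrödinger potentials being smooth whenever marginals have positive densities), so absolute continuity is automatic. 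The main obstacle in this plan is establishing the path-space identity used for joint convexity, which relies on the nontrivial fact that multi-marginal Schrödinger bridges against a Markov reference admit a Markov optimizer matching the composition of the pairwise bridges; the remaining pieces are routine convex-analytic manipulations and standard entropic-OT facts.
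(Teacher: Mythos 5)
Your treatment of $G$ (separate convexity, weak continuity, first variation of $T_{\tau_i}$ via the dual problem and an envelope argument) matches the paper, which simply cites known entropic-OT facts for these points. Where you genuinely diverge is the joint convexity of $F$, and your route is valid: the identity $F(\bmu)=\Fit(\bmu)+\tau\inf\{H(\rho\,|\,W^\tau_{t_1,\dots,t_T}):\rho\in\Pp(\Xx^T),\ \rho_i=\bmu^{(i)}\}+\mathrm{const}$ is exactly what the paper's Lemma~\ref{lem:entropy-reorganize} provides (inequality $(\star)$ together with its Markov equality case, using $\tau/\tau_i=1/(t_{i+1}-t_i)$), so the ``nontrivial fact'' you flag is already proved there in service of Theorem~\ref{thm:representer}; and partial minimization in $\rho$ of the jointly convex functional $H(\rho\,|\,W^\tau_{t_1,\dots,t_T})$ subject to the linear marginal constraints indeed yields a convex function of $\bmu$. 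The paper instead argues directly on the dual formulation~\eqref{eq:dual}: substituting $\tilde\varphi=\varphi+\tau\log\rho_\mu$ exhibits $T_\tau(\mu,\nu)+\tau H(\mu)$ as a supremum of functionals affine in $(\mu,\nu)$, and $F$ is a sum of $T-1$ such brackets plus convex terms. Your diagnosis that the naive decomposition $H(\gamma|\mu\otimes\nu)=H(\gamma)-H(\mu)-H(\nu)$ leaves uncancelled concave entropy at the interior marginals is correct and is precisely why some such device is needed. Your route buys conceptual clarity and reuses a lemma the paper proves anyway; the paper's buys a short, self-contained computation.

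The one genuine gap is your uniqueness step: ``strict convexity of $H$ on its effective domain combined with convexity of $G$ yields strict convexity of $F$'' is invalid as written, because $G$ is \emph{not} jointly convex --- the proposition states this, and you note it yourself when motivating the joint-convexity argument. The sum of a non-convex function and a strictly convex one need not be strictly convex, and knowing merely that $F=G+\tau H$ is convex while $\tau H$ is strictly convex gives no strictness for $F$. Two repairs are available. (a) The paper's: $\Ff$ is strictly convex on $\Pp(\Omega)$ (relative entropy is strictly convex in its first argument, $\Fit$ is convex in the marginals, which are linear in $R$), hence has at most one minimizer; Theorem~\ref{thm:representer}(ii) sends distinct minimizers of $F$ to distinct minimizers of $\Ff$ (their time marginals differ), a contradiction, so $F$ has at most one minimizer. (b) Within your own framework: writing $\mathcal{E}(\bmu)$ for your constrained infimum, if $\bmu_0\neq\bmu_1$ have finite values with optimizers $\rho_0\neq\rho_1$ (attained by weak compactness and lower semicontinuity, and distinct since their marginals differ), strict convexity of $\rho\mapsto H(\rho\,|\,W^\tau_{t_1,\dots,t_T})$ gives $\mathcal{E}\big(\tfrac{\bmu_0+\bmu_1}{2}\big)\le H\big(\tfrac{\rho_0+\rho_1}{2}\,\big|\,W^\tau_{t_1,\dots,t_T}\big)<\tfrac12\mathcal{E}(\bmu_0)+\tfrac12\mathcal{E}(\bmu_1)$, so $\mathcal{E}$, hence $F$, is strictly convex on its effective domain. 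Your existence argument and the first-order characterization are fine at the stated level of detail --- the paper likewise only sketches the optimality condition and defers the rigorous fixed-point argument to a citation.
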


From now on, we thus focus on minimizing the reduced functional $F$ in Eq.~\eqref{eq:reduced-objective}, keeping in mind that this is sufficient to minimize $\Ff$, which is our main goal.

\subsection{Mean-Field Langevin Dynamics}\label{sec:MFL}
As previously mentioned, the MFL dynamics is natural when it comes to minimize functionals of the form $F_\epsilon=G+(\tau+\epsilon) H$. Here, we are increasing the entropy factor by $\epsilon>0$ (recalled as an index of $F$) which will be useful to obtain convergence guarantees because $G$ is not convex but $F_0=G+\tau H$ is. Using the first-variation $V[\bmu]$ of $G$ given in Prop.~\ref{prop:first-properties}, the MFL dynamics is defined as the solution of the following non-linear SDE of McKean-Vlasov type, for $s\geq 0$:
\begin{align}\label{eq:MFL-SDE}
\left\{
\begin{aligned}
    dX_{s}^{(i)} &= - \nabla V^{(i)}[\bmu_s](X_{s}^{(i)})\d s + \sqrt{2(\tau+\epsilon)} \d B_{s}^{(i)} + { \d\Phi_s^{(i)}},\quad \Law(X_{0}^{(i)})=\bmu_0^{(i)}\\
    \bmu_{s}^{(i)} &= \Law(X_{s}^{(i)}), \quad i\in [T]
    \end{aligned}
    \right.
\end{align}
{where $\d\Phi_s^{(i)}$ is the boundary reflection in the sense of Skorokhod problem, see~\cite{tanaka1979stochastic, lions1984stochastic} (this term is not needed when $\Xx$ is the $d$-torus)}.
Beware that the pseudo-time $s$ of the optimization dynamics in Eq.~\eqref{eq:MFL-SDE} should not be mistaken with the pseudo-time $t$ of the process in Eq.~\eqref{eq:SDE}, which is now represented by the discrete exponents $i\in [T]$. 

The family of laws $(\bmu_{s})_{s\geq 0}$ of this stochastic process are characterized by the following system of PDEs (understood in the sense of distributions, with no-flux boundary conditions):
\begin{align}\label{eq:MFL}
    \partial_s \bmu_{s}^{(i)} = \nabla \cdot (\nabla V^{(i)}[\bmu_s]\bmu_{s}^{(i)}) + (\tau+\epsilon) \Delta \bmu_{s}^{(i)},\quad i \in [T]
\end{align}
which are coupled via the quantity $\nabla V^{(i)}[\bmu_s]$. The link between~\eqref{eq:MFL-SDE} and~\eqref{eq:MFL} follows from Ito-Tanaka's formula, see e.g.~\cite[Lem.~C3]{javanmard2020analysis}. This is a multi-species PDE where each of the species $\bmu^{(i)}$ attempts to minimize $\frac{\Delta t_i}{\lambda}\Fit_\sigma(\cdot|\hat \mu_{t_i}) +(\tau+\epsilon) H$ via a drift-diffusion dynamics, and is connected to $\bmu^{(i-1)}$ and $\bmu^{(i+1)}$ via Schr\"odinger bridges.

\subsection{Quantitative Convergence}\label{sec:theory}
Let us now state the main convergence result proved in Appendix~\ref{app:convergence}.

\begin{theorem}[Convergence]\label{thm:convergence}
Let $\bmu_0\in \Pp(\Xx)^T$ be such that $F(\bmu_0)<\infty$. Then for $\epsilon\geq 0$, there exists a unique solution $(\bmu_s)_{s\geq 0}$ to the MFL dynamics~\eqref{eq:MFL}. For $\epsilon>0$, $\Xx$ the $d$-torus and moreover assuming that $\mu_0$ has a bounded absolute log-density, it holds
\[
F_\epsilon(\bmu_s) - \min F_\epsilon  \leq e^{-Cs}\big(F_\epsilon(\bmu_0) - \min F_\epsilon \big).
\]
where $C=\beta e^{-\alpha/\epsilon}$ for some $\alpha,\beta>0$ independent of $\bmu_0$ and $\epsilon$. Moreover, taking a smooth time-dependent $\epsilon_s$ that decays asymptotically as $\tilde \alpha/\log(s)$ for some $\tilde \alpha>\alpha$, it holds $F_0(\bmu_s)- F_0(\bmu^*)\lesssim \log(\log(s))/\log(s)\to 0$ and $\bmu_s$ converges weakly to the min-entropy estimator $\bmu^*$.
\end{theorem}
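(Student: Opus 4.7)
The plan is to follow the standard program for mean-field Langevin convergence (as in \citet{nitanda2022convex, chizat2022mean}), adapting it to the multi-species structure encoded by the reduced functional $F$.

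First I would establish well-posedness of the non-linear SDE \eqref{eq:MFL-SDE} (equivalently the PDE \eqref{eq:MFL}) by a fixed-point argument in the spirit of Sznitman's propagation of chaos. The key ingredient is Lipschitz regularity of $\nabla V^{(i)}[\cdot]$ as a function of $\bmu$: the Schrödinger potentials depend smoothly on the marginals by classical entropic OT regularity (bounds uniform in $\bmu$ thanks to compactness of $\Xx$), while the fit term $\delta \Fit/\delta \bmu^{(i)}$ is smooth because of the Gaussian mollification. Reflection at the boundary is handled in the standard Skorokhod framework. This yields existence and uniqueness for every $\epsilon \geq 0$.

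For the exponential rate when $\epsilon>0$ on the torus, I would compute the entropy dissipation
\begin{equation*}
\frac{d}{ds} F_\epsilon(\bmu_s) = -(\tau+\epsilon) \sum_{i=1}^T I\bigl(\bmu_s^{(i)} \,\big|\, \hat\bmu_s^{(i)}\bigr),
\end{equation*}
where $I$ is the relative Fisher information and the \emph{proximal Gibbs measure} is $\hat\bmu_s^{(i)} \propto \exp(-V^{(i)}[\bmu_s]/(\tau+\epsilon))$. Then I would derive a uniform log-Sobolev inequality (LSI) for $\hat\bmu_s^{(i)}$ via the Holley--Stroock perturbation principle: since $V^{(i)}[\bmu_s]$ is uniformly bounded along the trajectory (the Schrödinger potentials are bounded on the compact torus and the fit first-variation is bounded by the Gaussian lower bound controlled via the bounded log-density assumption on $\mu_0$, propagated in $s$), the LSI constant of $\hat\bmu_s^{(i)}$ is at least $\beta e^{-\alpha/\epsilon}$ for constants $\alpha,\beta>0$. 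The joint convexity of $F_\epsilon$ (which holds because $F_0=G+\tau H$ is already jointly convex by Prop.~\ref{prop:first-properties}, and $\epsilon H$ only strengthens convexity) then allows the linear LSI to be upgraded into the non-linear functional inequality $F_\epsilon(\bmu_s) - \min F_\epsilon \leq C^{-1}\cdot(\text{dissipation})$ with $C=\beta e^{-\alpha/\epsilon}$, and Gronwall concludes. The annealing regime $\epsilon_s \sim \tilde\alpha/\log(s)$ is handled by combining this instantaneous rate $C_s \sim s^{-\alpha/\tilde\alpha}$ (which is integrable in a stretched sense because $\tilde\alpha>\alpha$) with the extra term $\dot\epsilon_s\, H(\bmu_s)$ that appears in $\frac{d}{ds}F_{\epsilon_s}(\bmu_s)$; a careful integration yields the $\log\log(s)/\log(s)$ rate. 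Weak convergence $\bmu_s \weakto \bmu^*$ then follows from lower semicontinuity of $F_0$, tightness from compactness of $\Xx$, and uniqueness of the minimizer via strict convexity of $H$.

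The main obstacle will be establishing the \emph{uniform} bound on $V^{(i)}[\bmu_s]$ along the entire trajectory, which is what feeds into the Holley--Stroock LSI constant and determines the exponent $\alpha$. Schrödinger potentials are well controlled when the marginals have bounded log-densities, so the argument reduces to showing that the log-density of $\bmu_s^{(i)}$ remains bounded in $s$, a moment-type estimate that must be closed self-consistently against the MFL dynamics. A secondary difficulty is the annealing bookkeeping: making rigorous the interplay between the degenerating LSI constant, the extra entropy production from the decay of $\epsilon_s$, and the gap $F_0(\bmu_s)-F_0(\bmu^*)$ (as opposed to $F_{\epsilon_s}(\bmu_s)-\min F_{\epsilon_s}$) requires a double application of the convexity argument to compare the two functionals across varying temperature.
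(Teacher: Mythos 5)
Your overall program (well-posedness via stability of $\nabla V$, entropy dissipation plus a uniform log-Sobolev inequality via Holley--Stroock, then annealing) matches the paper's, but there is a genuine gap at the heart of the exponential-rate argument: you use the wrong splitting of $F_\epsilon$, and with it the wrong proximal Gibbs measure. You take $\hat\bmu_s^{(i)} \propto \exp\bigl(-V^{(i)}[\bmu_s]/(\tau+\epsilon)\bigr)$, i.e.\ you treat $F_\epsilon = G + (\tau+\epsilon)H$. But the mean-field Langevin machinery you invoke requires the non-entropic part of the splitting to be \emph{convex} --- that is what upgrades the LSI for the proximal Gibbs measure into a bound of the form $F_\epsilon(\bmu)-\min F_\epsilon \leq (\text{temperature})\cdot H(\bmu\,|\,\hat\bmu)$ --- and Prop.~\ref{prop:first-properties} says precisely that $G$ is \emph{not} jointly convex (only separately so). Appealing afterwards to the joint convexity of $F_\epsilon$ does not repair this: the convexity must sit with the part whose first-variation defines $\hat\bmu$. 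The paper's key move is to split instead $F_\epsilon = (G+\tau H)+\epsilon H$, using the convex $F_0=G+\tau H$ as the ``smooth'' part; the proximal Gibbs measure is then $\propto \exp\bigl(-(V^{(i)}[\bmu_s]+\tau\log\bmu_s^{(i)})/\epsilon\bigr)$, whose effective potential contains the log-density of the current iterate. This is exactly why the theorem needs the torus and the bounded-log-density hypothesis: Lemma~\ref{ref:log-density-bound} (proved via Girsanov, comparing each species' law to the Wiener measure and bounding the stochastic exponent using Prop.~\ref{prop:stability-pot}) propagates a uniform bound on $\log\bmu_s^{(i)}$, so that Holley--Stroock applied to $(V^{(i)}+\tau\log\bmu_s^{(i)})/\epsilon$ gives an LSI constant $\rho\gtrsim e^{-\alpha/\epsilon}$ and the rate $C=2\epsilon\rho$.

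A symptom that your splitting cannot be right: with $\hat\bmu^{(i)}\propto e^{-V^{(i)}/(\tau+\epsilon)}$ the oscillation of the exponent stays bounded as $\epsilon\to 0$ (since $\tau>0$, and $\mathrm{osc}(V^{(i)})$ is bounded by compactness alone --- the Schr\"odinger potentials have oscillation at most $\mathrm{osc}(c_{\tau_i})$ by Eq.~\eqref{eq:osc-bound}, and the fit first-variation is bounded by $e^{D^2/(2\sigma^2)}$ with no log-density control needed, contrary to what you assert). Holley--Stroock would then yield an LSI constant bounded below uniformly in $\epsilon$, which is inconsistent with the $e^{-\alpha/\epsilon}$ in the statement; the $1/\epsilon$ in the exponent only appears once $\tau\log\bmu_s^{(i)}$ enters the potential divided by $\epsilon$. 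You do sense the right difficulty when you say the argument ``reduces to showing that the log-density of $\bmu_s^{(i)}$ remains bounded in $s$,'' but you motivate it by control of the Schr\"odinger potentials, which in fact require no such hypothesis --- the log-density bound is needed for the proximal potential itself under the correct splitting. Two smaller points: your dissipation identity is off by a power --- with your normalization it should read $\frac{\d}{\d s}F_\epsilon(\bmu_s)=-(\tau+\epsilon)^2\sum_i I(\bmu_s^{(i)}\,|\,\hat\bmu_s^{(i)})$, since $\delta F_\epsilon/\delta\bmu^{(i)}=(\tau+\epsilon)\log(\bmu^{(i)}/\hat\bmu^{(i)})$ up to a constant; and for the annealing phase the paper simply invokes~\cite[Thm.~4.1]{chizat2022mean} rather than redoing the bookkeeping, whereas your hand-rolled version would additionally need the $\dot\epsilon_s H(\bmu_s)$ term controlled uniformly, which again rests on the log-density estimates you have not yet closed.
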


We can make the following comments:
\begin{itemize}
    \item Under these assumptions, and with $\epsilon$ fixed, the convergence of $\bmu_s$ to the minimizer of $F_\epsilon$ in relative entropy and in Wasserstein distance, also holds, with the same rate~\cite{nitanda2022convex, chizat2022mean}. Note that our convergence argument slightly differ from these works, in that the functional $G$ is not convex itself, and the noise/diffusion term is used both to convexify the objective (since $G+\tau H$ is convex) and to make the dynamics converge (via the additional $\epsilon H$ term).
    \item By Thm.~\ref{thm:representer}, one can map $(\bmu_s)_{s\geq 0}$ to a dynamics in $\Pp(\Omega)$: it is this dynamics which we refer to as ``Mean-Field Langevin in Path Space'' in the title.
    \item This result gives a convergence rate for an optimization dynamics in a non-convex landscape, so it is not even obvious to have global convergence. The convergence rate depends exponentially on $-1/\epsilon$, a standard drawback of Langevin-like dynamics in absence of log-concavity. 
    \item  The only point in the proof where the assumption that $\Xx$ is the $d$-torus is needed is the technical Lem.~\ref{ref:log-density-bound} and we believe that the same convergence statement holds for $\Xx$ a convex compact domain of $\mathbb{R}^d$. Further extending Thm.~\ref{thm:convergence} to the non-compact case would however raise more profound theoretical challenges (even the well-posedness of the MFL dynamics is not clear in this case).
\end{itemize}

\paragraph{Summary of theoretical guarantees:}
Overall, the MFL dynamics of Eq.~\eqref{eq:MFL} with simulated annealing ($\epsilon_s\to 0$) converges to the unique minimizer $\bmu^*$ of the reduced problem in Eq.~\eqref{eq:reduced-objective}, from which we can build  by Thm~\ref{thm:representer} the min-entropy estimator $R^*\in \Pp(\Omega)$ which is optimal for~\eqref{eq:path-objective}. In this sense, our method leads to an inference method with end-to-end theoretical guarantees for trajectory inference. 
However, this statement is about an idealized dynamics which in practice has to be discretized in time and with particles. 
In the next section (\S \ref{sec:discretization}), we will see that the resulting discretization error can be controlled, leveraging the long history of work on mean-field dynamics. 

\subsection{Discretization}\label{sec:discretization}
In practice, an approximation of the MFL dynamics is obtained by running noisy gradient descent on the function $G_m:(\Xx^m)^T\to \RR$ defined as
\begin{align}\label{eq:Gm}
    G_m(\hat X) \coloneqq G(\hat \bmu_{\hat X}) &&\text{where}&& \hat \bmu_{\hat X}^{(i)} = \frac1m \sum_{j=1}^m \delta_{\hat X_j^{(i)}}
\end{align}
where $m\in \NN$ is be number of particles used to discretized each of the time marginals $\bmu^{(i)}$. Since it can be shown that $m \nabla_{X^{(i)}_j} G_m(\hat X) = \nabla V^{(i)}[\hat \bmu_{\hat X}](\hat X^{(i)}_{j})$ (see e.g.~\cite[Prop.~2.4]{chizat2022mean}), this leads to the following update equations, for $k\geq 0$, $i\in [T]$ and $j\in [m]$:
\begin{align}\label{eq:NPGD}
\left\{
\begin{aligned}
    \hat X_{j}^{(i)}[k+1] &= \hat X_{j}^{(i)}[k] - \eta \nabla V^{(i)}[\hat \bmu[k]](\hat X^{(i)}_{j}[k]) + \sqrt{2\eta (\tau+\epsilon)} Z^{(i)}_{j,k},\quad \hat X_{j}^{(i)}[0] \overset{iid}{\sim} \bmu_0^{(i)} \\
    \hat \bmu^{(i)}[k] &= \frac{1}{m} \sum_{j=1}^m \delta_{\hat X^{(i)}_j[k]}, \quad i\in [T]
    \end{aligned}
    \right.
\end{align}
where $\eta>0$ is a step-size, $Z^{(i)}_{j,k}$ are independent standard Gaussian variables and one should moreover project all particles on $\Xx$ at each step in case $\Xx$ has boundaries. The convergence of such a numerical scheme towards the MFL dynamics is standard in the mean-field literature~\cite{sznitman1991topics, javanmard2020analysis}, and holds here thanks to the stability of $\nabla V$ which follows from~\cite{anonymous2022stability}. 

\begin{figure}
    \centering
    \begin{subfigure}{0.22\linewidth}
     \centering
        \includegraphics[width = \linewidth]{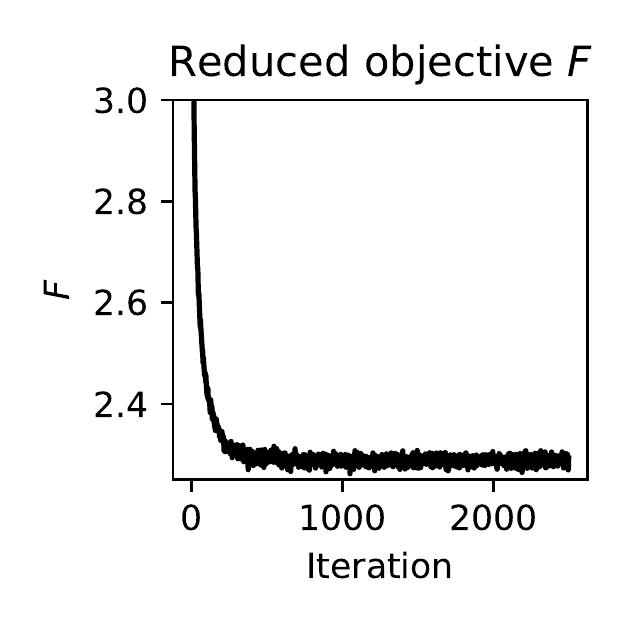}
    \end{subfigure}
    \begin{subfigure}{0.77\linewidth}
        \includegraphics[width = \linewidth]{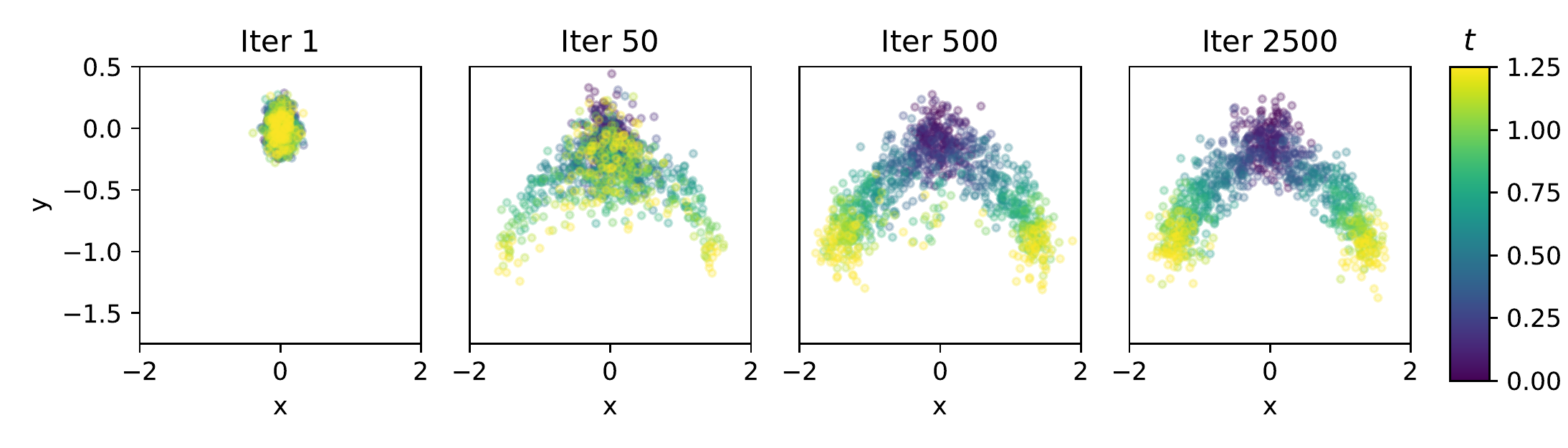}
    \end{subfigure}
    \caption{Optimization dynamics in the setting of Figure~\ref{fig:cover} with $N=64$. (left) Evolution of the objective function $F(\bmu_s)$ (Eq.~\eqref{eq:reduced-objective}) (right) Evolution of the reconstructed marginals $\hat \bmu^{(i)}[k]$ (Eq.~\eqref{eq:NPGD}) with the iteration number $k$, starting from isotropic Gaussians at $k=1$ and colored by measurement time $t_i$.}
    \label{fig:dynamics}
\end{figure}

In the related context of wide neural networks, quantitative bounds on the discretization error were studied in~\cite[Thm.~2]{mei2019mean}. It is shown that for $s=k\eta$, with high probability, $\vert G(\hat \bmu_k) - G(\bmu_{s})\vert = \tilde O(e^{Cs}(m^{-1/2}+\eta^{1/2}))$ for some $C>0$ independent of $k,\eta$ and $m$. Inspecting their proof, it can be seen that  this rate depends on the rate at which $G(\mu)$ and $\nabla V[\mu]$ can be estimated from independent samples of $\mu$, which is studied in our case in~\cite{delbarrio2022improved, rigollet2022sample}. The exponential dependence in the pseudo-time $s$ of these bounds make them however not precise enough to obtain interesting long-time guarantees and we leave for future work a direct convergence analysis of the fully discretized MFL dynamics (note that ~\cite[Cor.~2]{nitanda2022convex} gives a guarantee for the \emph{time}-discretized MFL dynamics).

Each iteration of Eq.~\eqref{eq:NPGD} requires to solve an entropic optimal transport problem in order to obtain $\nabla V$. As discussed in Appendix~\ref{app:sinkhorn}, this can be solved to precision $\tilde \epsilon$ in time $O(m^2/(\tau_i\tilde \epsilon))$ using Sinkhorn's algorithm.

\section{Numerical Experiments}\label{sec:numerics}

\subsection{Simulated data}\label{sec:fig1_example}
In our first experiment, we compare the behaviour of the MFL dynamics to that of Global Waddington-OT (gWOT)~\cite{lavenant2021towards} in a setting with few samples per time-point. Although both methods minimize a functional of the form \eqref{eq:estimator}, gWOT works on a fixed, discretized support: the union of all observed sample points. Thus, gWOT should perform poorly when the support set has missing regions or ``gaps''. To simulate this, we simulated a dataset with $N_i$ samples at time $t_i$, with $N_1 = N_{10} = 64$ and $N_i = N, 2 \leq i \leq 9$ for $N \in \{ 2^0, \ldots, 2^6 \}$. The samples are drawn from the marginals of a bifurcating stochastic differential equation (see Appendix~\ref{sec:simdetails} for details). 

In Figure \ref{fig:cover}(left) we illustrate two extreme cases: $N = 1$ (very few samples at intermediate timepoints) and $N = 64$ (uniform sampling over time) respectively, with $\lambda = 0.05, \lambda_{\mathrm{gWOT}} = 0.0025$. 
Visually, it seems that the output of MFL is robust to the few-sample regime, with relatively little qualitative difference between the reconstructed trajectories for $N = 1, 64$. On the other hand, the performance of gWOT degrades visibly once the set of observed points is a poor reflection of the support of the underlying process. 

To examine this quantitatively, we applied both MFL dynamics and gWOT for various values of $N$ and computed the root-mean-square (RMS) Energy Distance \cite{szekely2013energy} over time to an approximate ground truth (see Appendix~\ref{sec:simdetails}).
As shown in Figure \ref{fig:cover}(b), for large values of $N$ we observe that both MFL and gWOT perform similarly, but with MFL outperforming gWOT for small $N$ as expected.

Finally, Figure \ref{fig:dynamics} illustrates the evolution of the reduced objective $F$ \eqref{eq:reduced-objective} and the reconstructed marginals ${\bmu}^{(i)}[k]$ over the course of MFL dynamics for $N = 64, \lambda = 0.05$. To estimate the entropy term $H(\bmu)$, we used the Kozachenko-Leonenko nearest neighbour estimator~\cite[Section 6]{singh2016analysis} (note that estimating the entropy is \emph{not} needed in the algorithm; it is only used here to illustrate the convergence).

\subsection{Dealing with Branching}\label{sec:mass-variation}
In the context of single cell RNA-sequencing data it is crucial to be able to deal with the birth and death of cells, a.k.a.~branching, which does not occur homogeneously in the domain $\Xx$. Assume that we dispose of a function $g\in \Cc^1([0,1]\times \Xx)$ such that $g(t,x)$ is the prior knowledge about the instantaneous growth rate of the distribution of particles in $x$ at time $t$ in the model of Eq.~\eqref{eq:SDE}. That is, in time $\d t$ the probability of a particle $X_t$ dividing is $g(t, X_t) \d t$~\cite{baradat2021regularized}. 
We would like to incorporate this knowledge, and also allow for additional mass variations to account for the inaccuracy of our prior $g$. 

To this end, we proceed heuristically by simply replacing $T_{\tau_i}(\mu_{t_i},\mu_{t_{i+1}})$ in Eq.~\eqref{eq:ROT} with the following ``unbalanced'' Schr\"odinger bridge problem 
\begin{align}\label{eq:unbalanced}
    \min_{\gamma \in \Mm_+(\Xx^2)} \int c_{\tau_i}(x,y)\d\gamma(x,y) + \rho H(\gamma_1|\tilde \mu_{t_i}) + \rho H(\gamma_2|\tilde \mu_{t_{i+1}}) + \tau_i H(\gamma | \tilde \mu_{t_i}\otimes \tilde \mu_{t_{i+1}})
\end{align}
where $\tilde \mu_{t_i} \propto \exp(-g(x)(t_{i+1}-t_i)/2)\mu_{t_i}$ and $\tilde \mu_{t_{i+1}}\propto \exp(g(x)(t_{i+1}-t_i)/2)\mu_{t_{i+1}}$ are probability measures, $\rho > 0$ is a parameter, $\Mm_+(\Xx^2)$ is the set of nonnegative measures on $\Xx$ and $\gamma_1,\gamma_2$ are the marginals of $\gamma$. This problem can be solved with a variant of Sinkhorn's algorithm~\cite{chizat2018scaling,frogner2015learning}.

The rationale behind this formula is as follows: (i) the modifications $\tilde \mu_{t_{i}}$ of $\mu_{t_i}$ are intended to approximate the growth process $\partial_t \hat \mu_t = g(t,\cdot) \hat \mu_t$ over the time interval $[t_i,t_{i+1}]$ and (ii) we relax the marginal constraints, with a parameter $\rho>0$, to account for the potential inaccuracy of our prior $g$. We illustrate on Figure~\ref{fig:growth} the practical advantage of taking into account branching in the algorithm. 

\begin{figure}
    \begin{subfigure}{0.49\linewidth}
        \includegraphics[width = \linewidth]{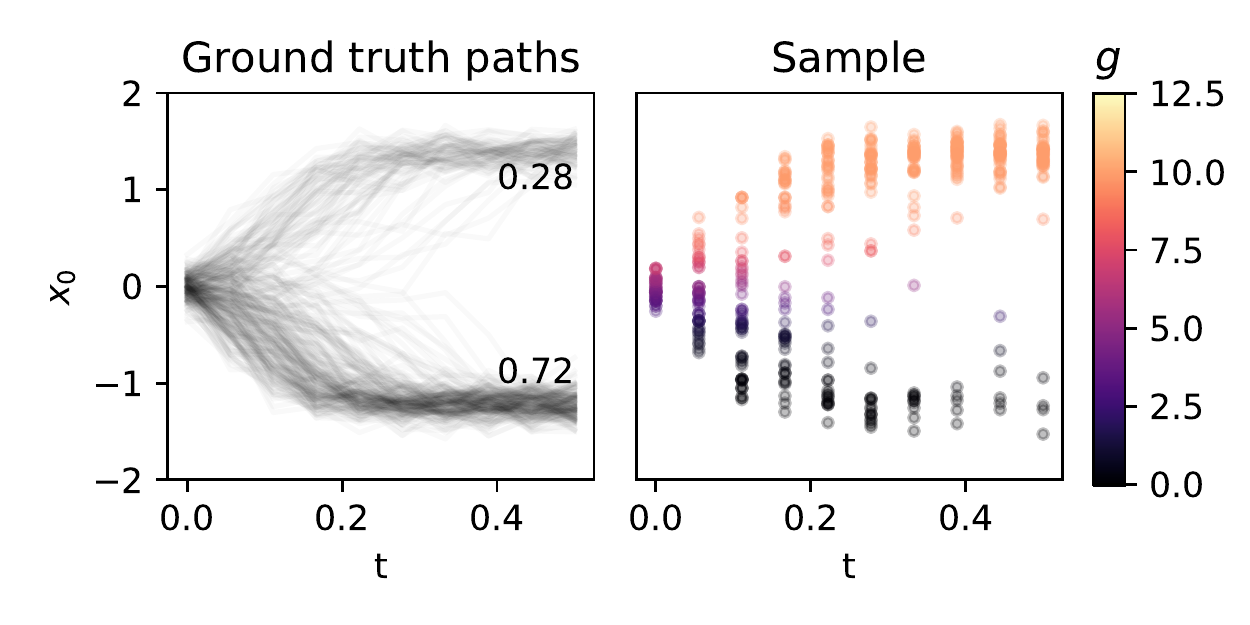}
    \end{subfigure}
    \begin{subfigure}{0.49\linewidth}
        \includegraphics[width = \linewidth]{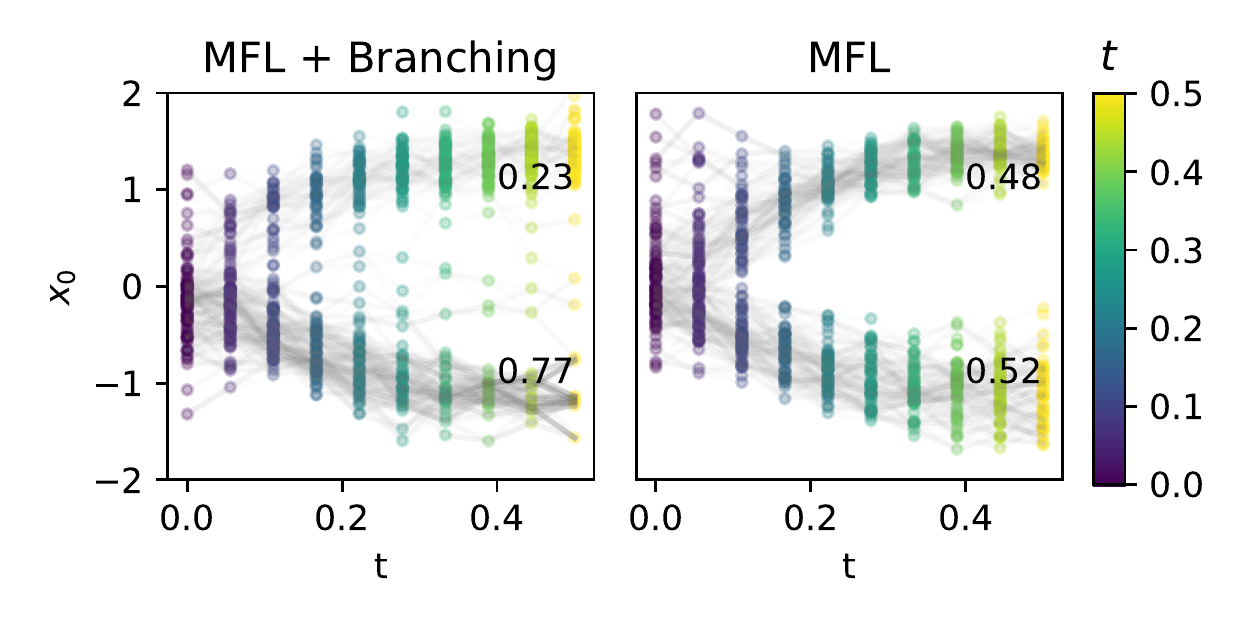}
    \end{subfigure}
    \caption{Accounting for branching rates allows for the separation of spatial dynamics from growth. (left) Ground truth paths and sample points. (right) Reconstruction produced by MFL dynamics with and without accounting for branching. Fraction of paths terminating in the upper (resp. lower) branch is annotated in the plot.}
    \label{fig:growth}
\end{figure}

\paragraph{Simulated data experiment} Here we consider a modified version of the bistable process from \cite[Figure 18]{lavenant2021towards} (see Appendix~\ref{sec:simdetails} for details).  
Figure \ref{fig:growth} shows paths sampled from the ground truth, sample points, and the inferred trajectories. Since the lower potential well 
is closer to the initial condition, we observe in the ground truth that $\approx 72\%$ of paths terminate in the lower branch. This effect is opposed by the high rate of branching in the upper branch, which ultimately results in more particles being sampled in the upper branch. By accounting for branching in the MFL dynamics we are able to reconstruct dynamics that isolate the effect of the potential, as evidenced by the comparable proportion (77\%) of inferred paths ending near the lower branch. On the other hand, neglecting the presence of branching results in growth effects being confounded with the potential, and the proportion of paths ending near each branch are roughly equal.

\subsection{Reprogramming dataset}

\begin{figure}
    \centering
    \begin{subfigure}{0.745\linewidth}
        \includegraphics[width = \linewidth]{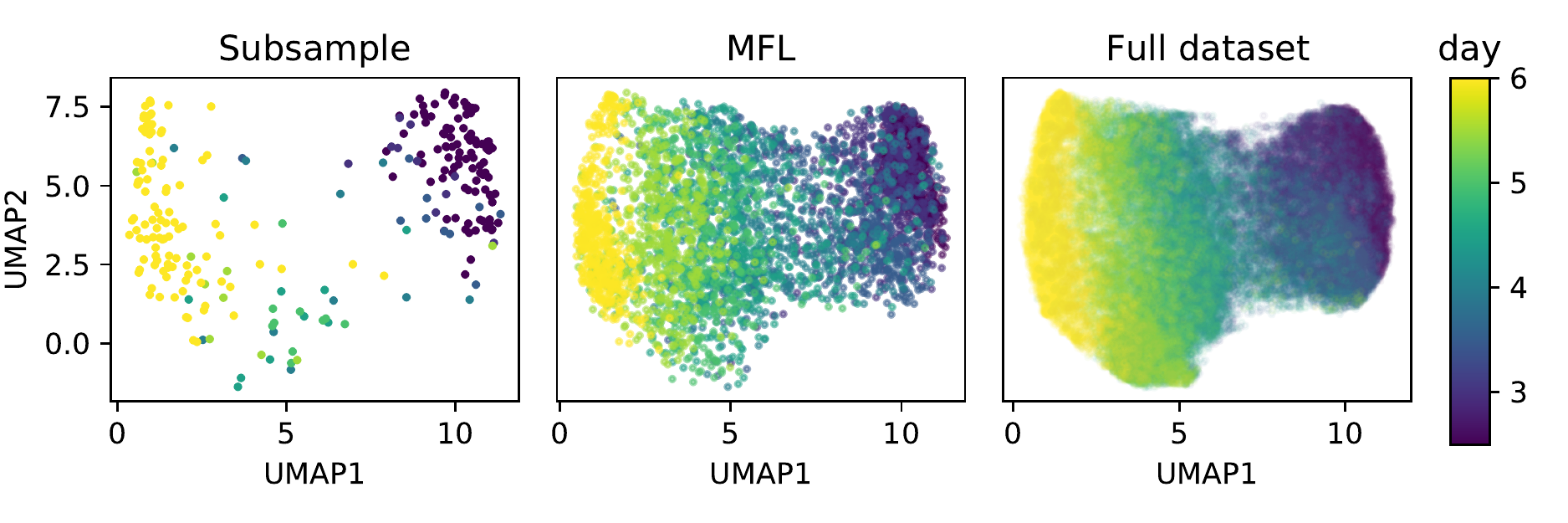}
    \end{subfigure}
    \begin{subfigure}{0.245\linewidth}
        \includegraphics[width = \linewidth]{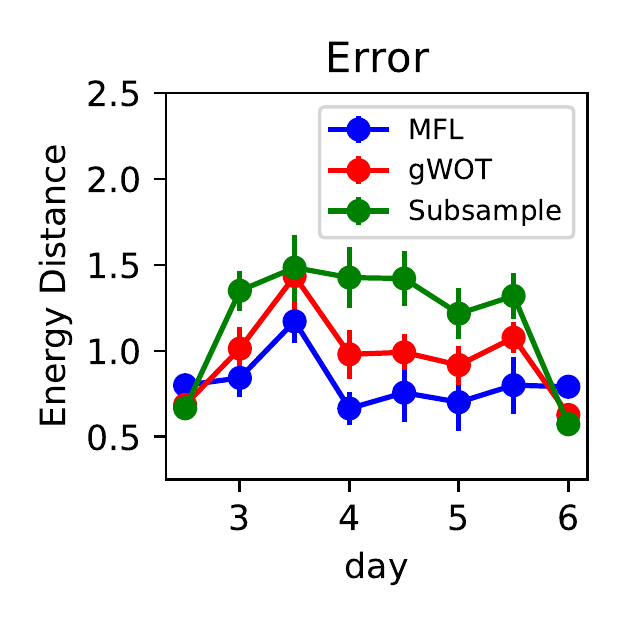}
    \end{subfigure}
    \caption{(left) Subsampled timepoints (Subsample) from the iPSC reprogramming dataset of \cite{schiebinger2019optimal}, with 100 cells at the first and last timepoints and 10 cells at each intermediate timepoint, shown beside inferred marginals obtained using MFL dynamics and the full dataset for those timepoints. (right) Best performance as measured by Energy Distance of inferred marginals to the full dataset for MFL dynamics (Langevin) and Global Waddington-OT~\cite{lavenant2021towards} (gWOT), with the subsampled timepoints (Sample) for reference.}
    \label{fig:reprogramming}
\end{figure}

We now consider the stem cell reprogramming dataset of \cite{schiebinger2019optimal}, in which a population of differentiating cells were profiled over a time course using single-cell RNA sequencing. For the purpose of this example, we restrict our attention to days 2.5-6 inclusive making a total of 8 timepoints. As previously, we consider a regime where few samples are observed as snapshots of the time-series and we apply MFL dynamics to infer reconstructed marginals. We reason that if the underlying process is well described by a Schr\"odinger bridge (as was empirically validated in \cite{schiebinger2019optimal}), then MFL dynamics should be able to ``improve'' on the sample marginals. As a proxy for ground truth, we used the full dataset (consisting of 59154 cells over the 8 timepoints). From this, we subsampled timepoints consisting of 100 cells at days 2.5 and 6, and 10 cells at the remaining intermediate timepoints. After carrying out a series of preprocessing steps (see Appendix~\ref{sec:preprocessing} for details), we applied MFL dynamics to the subsampled data to produce reconstructed marginals. Figure \ref{fig:reprogramming}(left) shows an example of the sampled points, MFL reconstruction, and the full dataset. 

A reconstruction ``error'' was then computed as the Energy Distance \eqref{eq:energydistance} of each reconstructed marginal to the corresponding snapshot in the full dataset. Figure \ref{fig:reprogramming}(right) shows the error for each timepoint for each of MFL and gWOT, for the best performing parameters found in a parameter sweep. We observe that both MFL and gWOT improve upon the subsampled snapshots, but MFL by a larger margin. 

\section{Conclusion}
We introduced a grid-free numerical method for trajectory inference that computes the min-entropy estimator introduced in~\cite{lavenant2021towards}, with global and quantitative convergence guarantees in the mean-field limit. This method arises naturally when decomposing the optimization problem in a suitable way and, in practice, outperforms the fixed-grid method of~\cite{lavenant2021towards}. 

Concerning limitations, our method shares those of the min-entropy estimator : since it does not incorporate fine structural prior on the structure of the Waddington potential (i.e. it is fully nonparametric), it may suffer from a limited statistical efficiency in high dimension. An interesting research direction is to quantify this statistical performance and to adapt our algorithm to learn the Waddington potential $\Psi$ in a structured parameterized class of functions, jointly with the law on paths $P$.

On a more abstract level, our main insight is that min-entropy problems in Wiener space can be solved via a multi-species diffusion dynamics coupled via Schr\"odinger bridges. This point of view raises interesting theoretical questions, e.g.~can we rigorously interpret our dynamics as a diffusion in path space?   

\section*{Acknowledgements}

GS was supported by a Career Award at the Scientific Interface from the Burroughs Wellcome Fund, a NFRF Exploration Grant, a NSERC Discovery Grant, and a CIHR Project Grant. 

{
\small
\bibliography{LC.bib}
}

\appendix

\newpage


\section{Background on the Schr\"odinger bridge problem}~\label{app:EOT}
The following facts can be found for instance in the lecture notes~\cite[Sec.~4]{nutz2021introduction}. 
Consider $\mu,\nu \in \Pp(\Xx)$ and the entropy-regularized optimal transport problem defining $T_\tau(\mu,\nu)$ in Eq.~\eqref{eq:ROT} with a cost function $c\in \Cc^\infty(\Xx\times \Xx)$. This problem admits a unique solution $\gamma^*$ and admits a dual formulation
\begin{align}\label{eq:dual}
T_\tau(\mu,\nu) = \max_{\varphi\in L^1(\mu),\psi\in L^1(\nu)} \int \varphi \d\mu + \int \psi \d\nu +\tau \Big(1 - \int e^{(\varphi(x)+\psi(y)- c(x,y))/\tau}\d\mu(x)\d\nu(y)\Big).
\end{align}
The dual problem admits a unique solution in $L^1(\mu)\times L^1(\nu)$ up to the transformation $(\varphi+\kappa,\psi-\kappa)$ for $\kappa\in \RR$. Moreover, we have that $T_\tau(\mu,\nu) = \int \varphi\d\mu + \int \psi\d\nu$ and the primal-dual relation
\[
\frac{\d \gamma^*}{\d \mu\otimes \nu}(x,y) = g(x,y)\coloneqq e^{(\varphi(x)+\psi(y)-c(x,y))/\tau}, \quad \mu\otimes \nu \text{ a.e.}
\]
Also, the potentials satisfy the following  equations, which are the first-order optimality conditions:
\begin{align}\label{eq:schrodinger-system}
\left\{
\begin{aligned}
\varphi(x) &= -\tau \log \Big( \int e^{(\psi(y)-c(x,y))/\tau}\d\nu(y)\Big)\\
\psi(y) &= -\tau \log \Big( \int e^{(\varphi(x)-c(x,y))/\tau}\d\mu(x)\Big)
\end{aligned}
\right.
\end{align}
These equations are a priori only satisfied $\mu$ (resp. $\nu$) almost everywhere, but they can be used to extend $\varphi$ and $\psi$ as continuous (in fact infinitely differentiable) functions over $\Xx$, which satisfy these equations everywhere and are unique in $\Cc^\infty(\Xx)$, up to the additive invariance mentioned above. Throughout the paper, we refer to such functions $(\varphi,\psi)$ as the \emph{Schr\"odinger potentials} (they are also referred as EOT potentials in~\cite{nutz2021introduction}). Let us conclude this section with two observations.
\begin{enumerate}[label=(\roman*)]
\item One can bound the oscillation of $\varphi$ as follows
\begin{align}\label{eq:osc-bound}
   \mathrm{osc}(\varphi)&\coloneqq \sup_x \varphi(x)- \inf_x \varphi(x) \leq \sup_{x,y} c(x,y) -  \inf_{x,y} c(x,y) = \mathrm{osc}(c).
\end{align}
This is obtained by upper bounding $c$ inside the integral which leads to $\varphi(x) \leq \sup_{x,y} c(x,y) -\tau \log \Big( \int e^{(\psi(y))/\tau}\d\nu(y)\Big)$. Subtracting the analogous lower bound, we observe that the $\log$ terms cancel and we get the bound on the oscillation.

\item One can differentiate~\eqref{eq:schrodinger-system} to see that we have for $x\in \Xx$
\begin{align}\label{eq:potentials-derivative1}
\nabla \varphi(x) &= \int \nabla_x c(x,y)\d\gamma^*(y|x) = \E [\nabla_x c(X,Y)|X=x]
\end{align}
with $\Law(X,Y)=\gamma^*$  and $\gamma^*(\d y|x) = \int g(x,y) \nu(\d y)$ is the conditional distribution of $Y$ given $X$.
\end{enumerate}

\section{Proof of Theorem~\ref{thm:representer}}\label{app:proof-representer}

 Let us first recall the statement of Thm.~\ref{thm:representer}.
 \begin{theorem}[Representer Theorem]
Let $\mathrm{Fit}:\Pp(\Xx)^T\to \RR$ be \emph{any} function. \begin{enumerate}[label=(\roman*)]
    \item If $\Ff$ (Eq.\eqref{eq:path-objective}) admits a minimizer $R^*$ then $(R^*_{t_1},\dots,R^*_{t_T})$ is a minimizer for $F$ (Eq.\eqref{eq:reduced-objective}).
    \item Conversely, if $F$ admits a minimizer $\bmu^* \in \Pp(\Xx)^T$ then a minimizer for $\Ff$ is built as
\[
R(\cdot) = \int_{\Xx^T} W^\tau(\cdot| x_1,\dots,x_T )\d R_{t_1,\dots,t_T}(x_1,\dots,x_T)
\]
 where $W^\tau(\cdot|x_1,\dots,x_T)$ is the law of $W^\tau$ conditioned on passing through $x_1,\dots,x_T$ at times $t_1,\dots,t_T$ respectively and $R_{t_1,\dots,t_T}$ is the composition of the transport plans $\gamma_{i,i+1}$ which are optimal in the definition of $T_{\tau_i}(\bmu^{*(i)},\bmu^{*(i+1)})$, for $i=1,\dots,T$. 
\end{enumerate} 
\end{theorem}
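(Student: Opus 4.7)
The plan is to prove the identity $\Ff(R) = F(\bmu) + \tau\log\vol(\Xx)$ whenever $\bmu = (R_{t_1},\dots,R_{t_T})$ and $R$ is the Brownian-bridge-plus-Markov-Schr\"odinger reconstruction from (ii), together with the inequality $\Ff(R) \geq F(\bmu) + \tau\log\vol(\Xx)$ for arbitrary $R$ with those marginals. Statements (i) and (ii) then follow immediately: any $\Ff$-minimizer $R^*$ must attain equality, so its time marginals minimize $F$; conversely, any $F$-minimizer $\bmu^*$ yields an $\Ff$-minimizer via the construction in (ii).

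\emph{First reduction (from $\Pp(\Omega)$ to $\Pp(\Xx^T)$).} By the chain rule for relative entropy applied to the disintegration of $R$ and $W^\tau$ at times $t_1,\dots,t_T$,
$$H(R|W^\tau) = H(R_{t_1,\dots,t_T}|W^\tau_{t_1,\dots,t_T}) + \int H\bigl(R(\cdot|x_1,\dots,x_T)\,\big|\,W^\tau(\cdot|x_1,\dots,x_T)\bigr)\,\d R_{t_1,\dots,t_T}(x_1,\dots,x_T).$$
The second term is non-negative and vanishes iff the two conditional laws agree. Since $\Fit$ depends on $R$ only through its marginals, for any fixed joint marginal $R_{t_1,\dots,t_T}$ the infimum of $\Ff$ is attained by choosing these conditionals to be Brownian bridges, as prescribed in (ii).

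\emph{Second reduction (from $\Pp(\Xx^T)$ to $\Pp(\Xx)^T$).} The reference $W^\tau_{t_1,\dots,t_T}$ is Markov with density $\vol(\Xx)^{-1}\prod_{i=1}^{T-1} p_{\tau_i}(x_i,x_{i+1})$ against the product volume (the first marginal is uniform since $W^\tau$ starts uniform and the uniform measure is invariant under reflected Brownian motion). By~\cite[Cor.~3.5]{benamou2019entropy}, the minimizer of $\pi\mapsto H(\pi|W^\tau_{t_1,\dots,t_T})$ over $\pi\in\Pp(\Xx^T)$ with marginals $\bmu$ is the Markov law $\pi^*$ obtained by chaining the Schr\"odinger bridges $\gamma_{i,i+1}^*$ for $T_{\tau_i}(\bmu^{(i)},\bmu^{(i+1)})$. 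Using the Markov factorization and the chain rule once more,
$$H(\pi^*|W^\tau_{t_1,\dots,t_T}) = H(\bmu^{(1)}) + \log\vol(\Xx) + \sum_{i=1}^{T-1}\int H\bigl(\gamma_{i,i+1}^*(\cdot|x_i)\,\big|\,p_{\tau_i}(x_i,\cdot)\bigr)\,\d\bmu^{(i)}(x_i).$$
Each inner integral is then converted into $\tau_i^{-1}T_{\tau_i}(\bmu^{(i)},\bmu^{(i+1)}) + H(\bmu^{(i+1)})$ by changing the reference measure from the volume $\d x_{i+1}$ to $\bmu^{(i+1)}(\d x_{i+1})$, a step that contributes the correction $\int\log\bmu^{(i+1)}\,\d\bmu^{(i+1)} = H(\bmu^{(i+1)})$. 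Telescoping the marginal entropies and using $\tau/\tau_i = 1/(t_{i+1}-t_i)$, one arrives at $\tau H(R|W^\tau) = \tau\log\vol(\Xx) + \tau H(\bmu) + \sum_{i=1}^{T-1}(t_{i+1}-t_i)^{-1}T_{\tau_i}(\bmu^{(i)},\bmu^{(i+1)})$, so adding $\Fit(\bmu)$ yields $\Ff(R) = F(\bmu) + \tau\log\vol(\Xx)$.

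The delicate point is the bookkeeping in this second reduction: one must carefully track which reference measure appears in each relative entropy (the product of volumes, $p_{\tau_i}\bmu^{(i)}\otimes\Leb$, or $p_{\tau_i}\bmu^{(i)}\otimes\bmu^{(i+1)}$), since the Radon-Nikodym mismatch between these references is exactly what produces the term $H(\bmu)$ with a \emph{positive} sign in the final formula. This sign flip with respect to~\cite[Prop.~B.1]{lavenant2021towards}, whose $T_\tau$ uses the volume reference, is precisely what makes $F$ amenable to MFL dynamics and is the main novelty of the reduction.
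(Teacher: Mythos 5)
Your proof is correct and takes essentially the same route as the paper: the chain rule for relative entropy under conditioning gives the reduction from $\Pp(\Omega)$ to $\Pp(\Xx^T)$ with Brownian bridges as the equality case, and your reference-measure bookkeeping—normalizing by $\vol(\Xx)$ and switching the second coordinate's reference from Lebesgue to $\bmu^{(i+1)}$, which generates the $+\tau H(\bmu)$ term with its positive sign and the constant $\tau\log\vol(\Xx)$—is exactly the reorganization performed in Lemma~\ref{lem:entropy-reorganize}. The only organizational difference is that the paper proves a two-tier inequality valid for \emph{every} $R$ via the telescoping bound of \cite[Sec.~3.4]{benamou2019entropy} and then identifies the equality cases (Brownian bridges, Markovianity, optimal plans), whereas you evaluate the entropy exactly at the chained Markov optimizer and delegate its optimality among couplings with fixed single-time marginals to \cite[Cor.~3.5]{benamou2019entropy}—precisely the shortcut the main text notes the theorem is ``straightforward to deduce from.''
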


This theorem is a direct consequence of the following lemma, which is similar in spirit to~\cite[Prop.~B.1]{lavenant2021towards}, but the terms involved are different.

 \begin{lemma}\label{lem:entropy-reorganize}
 There exists $C>0$ such that,
 for any $R\in \Pp(\Omega)$ and $t_1,\dots,t_T$ a collection of instants, it holds
 \begin{align*}
 H(R\mid W^\tau) &\overset{(\dagger)}{\geq} H(R_{t_1,\dots,t_T}|W^\tau_{t_1,\dots,t_T}) \\
 &\overset{(\star)}{\geq} \sum_{i=1}^{T-1} H(R_{t_i,t_{i+1}}| p_{\tau_i} (R_{t_i}\otimes  R_{t_{i+1}}) ) + \sum_{i=1}^T H(R_{t_i})+C.
 \end{align*}
 The first inequality $(\dagger)$ becomes an equality if and only if
 \[
 R(\cdot) = \int_{\Xx^T} W(\cdot| x_1,\dots,x_T )\d R_{t_1,\dots,t_T}(x_1,\dots,x_T)
 \]
 where $W^\tau(\cdot|x_1,\dots,x_T)$ is the law of $W^\tau$ conditioned on passing through $x_1,\dots,x_T$ at times $t_1,\dots,t_T$ respectively. In addition,  the second inequality $(\star)$ becomes an equality if and only if $R$ is Markovian. 
 \end{lemma}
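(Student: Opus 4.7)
I disintegrate both $R$ and $W^\tau$ against the joint evaluation map $e_{t_1,\dots,t_T}:\Omega\to \Xx^T$, yielding regular conditionals $R(\cdot\mid x)$ and $W^\tau(\cdot\mid x)$ for $x=(x_1,\dots,x_T)\in \Xx^T$. The chain rule for relative entropy then gives
\[
H(R\mid W^\tau) = H(R_{t_1,\dots,t_T}\mid W^\tau_{t_1,\dots,t_T}) + \int H\big(R(\cdot\mid x)\,\big|\,W^\tau(\cdot\mid x)\big)\,\d R_{t_1,\dots,t_T}(x),
\]
and non-negativity of the integrand proves $(\dagger)$. Equality holds iff $R(\cdot\mid x) = W^\tau(\cdot\mid x)$ for $R_{t_1,\dots,t_T}$-a.e.\ $x$, which is precisely the stated gluing formula using Brownian bridges.

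\textbf{Strategy for $(\star)$.} Since $W^\tau$ is Markov with uniform (invariant) initial law on $\Xx$, for $t_1<\dots<t_T$,
\[
\frac{\d W^\tau_{t_1,\dots,t_T}}{\d\Leb^T}(x_1,\dots,x_T) = \vol(\Xx)^{-1}\prod_{i=1}^{T-1} p_{\tau_i}(x_i,x_{i+1}).
\]
Writing $\mu_i = R_{t_i}$, $\mu_{i,i+1} = R_{t_i,t_{i+1}}$, and assuming all densities exist (otherwise $H(R\mid W^\tau)=+\infty$ and the inequality is trivial), I expand each summand of the claimed lower bound using
\[
H(\mu_{i,i+1}\mid p_{\tau_i}\mu_i\otimes \mu_{i+1}) = H(\mu_{i,i+1}\mid\Leb^2) - \E_{\mu_{i,i+1}}[\log p_{\tau_i}] - H(\mu_i) - H(\mu_{i+1}),
\]
then combine with $\sum_{i=1}^T H(\mu_i)$ and match the $\E[\log p_{\tau_i}]$ terms with the analogous expansion of the LHS. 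With $C = \log\vol(\Xx)$, the inequality $(\star)$ reduces to the purely entropic statement
\[
H(\mu_{1,\dots,T}\mid \Leb^T) + \sum_{i=2}^{T-1} H(\mu_i) \geq \sum_{i=1}^{T-1} H(\mu_{i,i+1}\mid\Leb^2).
\]

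\textbf{Reduction to conditioning reduces entropy.} Switching to differential entropies $h:=-H(\cdot\mid\Leb)$ and using the chain rules $h(\mu_{i,i+1}) = h(\mu_i) + h(\mu_{i+1}\mid \mu_i)$ and $h(\mu_{1,\dots,T}) = h(\mu_1) + \sum_{i=1}^{T-1} h(\mu_{i+1}\mid \mu_1,\dots,\mu_i)$, the previous display becomes $\sum_i h(\mu_{i+1}\mid\mu_i) \geq \sum_i h(\mu_{i+1}\mid \mu_1,\dots,\mu_i)$, which is Shannon's ``conditioning reduces entropy'' applied termwise. Equality holds iff $\mu_{i+1}$ is conditionally independent of $(\mu_1,\dots,\mu_{i-1})$ given $\mu_i$ for every $i$, i.e., iff $R_{t_1,\dots,t_T}$ is Markov.

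\textbf{Anticipated obstacle.} The main nuisance is the careful bookkeeping of how each $H(\mu_i)$ accumulates when the Schr\"odinger-type entropies on the RHS are expanded (once as a summand of $\sum H(\mu_i)$, minus once or twice depending on whether $i$ is a boundary index of $\{1,\dots,T\}$), together with the fact that $\Leb$ is not a probability measure so $H(\cdot\mid\Leb)$ can be signed --- this is where the constant $C$ enters. Secondary technical points are the measurable selection of regular conditionals in $(\dagger)$ and the reduction to the density case in $(\star)$, both routine given compactness of $\Xx$ and smoothness of the heat kernels $p_{\tau_i}$.
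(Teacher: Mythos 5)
Your proof is correct. For $(\dagger)$ it is the same argument as the paper's: additivity of relative entropy under disintegration along the joint evaluation map $(e_{t_1},\dots,e_{t_T})$ (the paper cites \cite[Eq.~(A9)]{leonard2012schrodinger}), with equality exactly when the conditionals are the pinned Brownian bridges. For $(\star)$, however, you take a genuinely more self-contained route. The paper invokes as a black box the inequality of \cite[Sec.~3.4]{benamou2019entropy}, namely $H(R_{t_1,\dots,t_T}\mid W^\tau_{t_1,\dots,t_T}) \geq \sum_{i=1}^{T-1} H(R_{t_i,t_{i+1}}\mid W^\tau_{t_i,t_{i+1}}) - \sum_{i=2}^{T-1} H(R_{t_i}\mid W^\tau_{t_i})$ with equality iff $R_{t_1,\dots,t_T}$ is Markov, and then performs precisely the change-of-reference-measure bookkeeping you also carry out ($H(R_{t_i}\mid W^\tau_{t_i}) = H(R_{t_i}) + \log\vol(\Xx)$ and $H(\gamma\mid W^\tau_{t_i,t_{i+1}}) = \log\vol(\Xx) + H(\gamma\mid p_{\tau_i}\,\mu\otimes\nu) + H(\mu) + H(\nu)$), arriving at the same constant $C=\log\vol(\Xx)$. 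You instead do the reference change first and then prove the residual inequality $H(\mu_{1,\dots,T}\mid\Leb^T) + \sum_{i=2}^{T-1} H(\mu_i) \geq \sum_{i=1}^{T-1} H(\mu_{i,i+1}\mid\Leb^2)$ from scratch via the chain rule for differential entropy and $h(X_{i+1}\mid X_i)\geq h(X_{i+1}\mid X_1,\dots,X_i)$ --- in substance a re-proof of the cited lemma, with the bonus that the equality case becomes transparent: it is the vanishing of the conditional mutual informations $I(X_{i+1};X_1,\dots,X_{i-1}\mid X_i)$, i.e.\ the Markov property of $R_{t_1,\dots,t_T}$ (note that, like the paper's proof body and unlike the loosely worded statement ``$R$ is Markovian'', this is the Markov property of the finite-dimensional marginal, which is what Thm.~\ref{thm:representer} actually uses). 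What the paper's route buys is that every manipulated quantity is a nonnegative relative entropy, so no integrability issues arise; your route trades this for self-containedness, and the finiteness caveat you flag does need a word but goes through on compact $\Xx$: if the middle term of the lemma is finite then the joint density exists, each $h(\mu_i)$ is finite since $h(\mu_i)\leq \log\vol(\Xx)$ and, by the chain rule, $h(\mu_i)\geq h(\mu_{1,\dots,T}) - (T-1)\log\vol(\Xx)$, and the cross terms $\E_{\mu_{i,i+1}}[\log p_{\tau_i}]$ are finite because the heat kernel at positive time on compact $\Xx$ is bounded away from $0$ and $\infty$; hence all your rearrangements of signed entropies are legitimate (and when the middle term is $+\infty$, $(\star)$ is trivial).
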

 \begin{proof}
  The first inequality $(\dagger)$ and the equality case follows from the additivity property of the relative entropy under conditioning~\cite[Eq.~(A9)]{leonard2012schrodinger}, { namely that it holds
  \[
  H(R|W^\tau) = H(R_{t_1,\dots,t_T}|W^\tau_{t_1,\dots,t_T}) + \int H\big(R(\cdot|x_1,\dots,x_T)|W^\tau(\cdot|x_1,\dots,x_T)\big)\d R_{t_1,\dots,t_T}(x_1,\dots,x_T)
  \]
  where the second term vanishes if and only if the conditional distributions $R(\cdot|x_1,\dots,x_T)$ are Brownian bridges, for $R_{t_1,\dots,t_T}$ almost every $(x_1,\dots,x_T)$.
  }
For the second inequality $(\star)$, \cite[Sec.~3.4]{benamou2019entropy} states that
\[
H(R_{t_1,\dots,t_T}|W^\tau_{t_1,\dots,t_T}) \geq \sum_{i=1}^{T-1} H(R_{t_i,t_{i+1}}|W^\tau_{t_i,t_{i+1}}) - \sum_{i=2}^{T-1} H(R_{t_i}|W^\tau_{t_i}) \eqqcolon E
\]
with equality if and only if $R_{t_1,\dots,t_T}$ is Markovian. This is the formula of~\cite[Prop.~B.1]{lavenant2021towards}, but this expression is unsuitable for our purposes and we need to further reorganise the terms in $E$. 

 Without loss of generality, let us assume that $R_{t_i}$ are absolutely continuous with density $\frac{\d R_{t_i}}{\d x}(x)=r_i(x)$ (if this is not the case, both sides of the inequality are infinite) and let $V_\Xx$ be the Lebesgue volume of $\Xx$. On the one hand, since $W^\tau_{t_i}$ is the normalized volume measure on $\Xx$, it holds
 \[
 H(R_{t_i}|W^\tau_{t_i}) = H(R_{t_i})+\log(V_\Xx).
 \]
 
 On the other hand, letting $\tau_i=\tau(t_{i+1}-t_i)$ it holds
 \[
 W^\tau_{t_i,t_{t+1}}(\d x,\d y) = V_\Xx^{-1} p_{\tau_i}(x,y)\d x\d y
 \]
 by definition of the transition probability density $p$ of the Brownian motion on $\Xx$. It follows that for any  $\mu,\nu \in \Pp(\Xx)$ with finite differential entropy and $\gamma \in \Pi(\mu,\nu)$ it holds
 \begin{align*}
H(\gamma |W^{\tau}_{t_i,t_{i+1}}) &= \int \log\Big(\frac{\d \gamma}{\d x \otimes \d y}\frac{V_\Xx}{p_{\tau_i}}\Big)\d\gamma (x,y)\\
&=\log (V_\Xx )  +\int \log\Big(\frac{\d \gamma}{p_{\tau_i}\d \mu\otimes \nu}\frac{\d\mu}{\d x}\frac{\d\nu}{\d y}\Big)\d\gamma \\
&=\log (V_\Xx )  +H( \gamma|p_{\tau_i} \mu\otimes \nu) +H(\mu) +H(\nu)
 \end{align*}
 where we used the fact that $\gamma \in \Pi(\mu,\nu)$ to simplify the two last terms (see~\cite[Lem.~1.6]{marino2020optimal} for more details on the change of reference measure in regularized optimal transport). Putting everything together, and using the fact that $R_{t_i,t_{i+1}}\in \Pi(R_{t_i},R_{t_{i+1}})$ we get 
\begin{align*}
E &= \log(V_\Xx )+ \sum_{i=1}^{T-1} H( R_{t_i,t_{i+1}}|p_{\tau_i} R_{t_i}\otimes R_{t_{i+1}}) + \sum_{i=1}^{T-1} H(R_{t_i}) +\sum_{i=2}^{T} H(R_{t_i}) - \sum_{i=2}^{T-1} H(R_{t_i})\\
&= \log(V_\Xx )+\sum_{i=1}^{T-1} H( R_{t_i,t_{i+1}}|p_{\tau_i} R_{t_i}\otimes R_{t_{i+1}}) +\sum_{i=1}^T H(R_{t_i}).
\end{align*}
which proves the formula.
 \end{proof}
 \begin{proof}[Proof of Thm.~\ref{thm:representer}]
 Clearly, a minimizer $R^*\in \Pp(\Omega)$ of $\Ff (R) = \Fit(R_1,\dots,R_T)+\tau H(R|W^\tau)$ is of the form given by Lem.~\ref{lem:entropy-reorganize}. Let $\bmu^{(i)}=R^*_{t_i}$ be its marginals and $\bgamma^{(i)} = R^*_{t_i,t_{i+1}}$ which clearly satisfies $\bgamma^{(i)}\in \Pi(\bmu^{(i)},\bmu^{(i+1)})$. It holds, with $C=\log(V_\Xx)$,
 \begin{align*}
     \Ff(R^*) &= \Fit(\bmu^{(1)}, \dots ,\bmu^{(T)}) + \tau \sum_{i=1}^{T-1} H(\bgamma^{(i)}|p_{\tau_i}\bmu^{(i)}\otimes \bmu^{(i+1)}) + \tau H(\bmu)+C\\
     &\geq \Fit(\bmu^{(1)}, \dots ,\bmu^{(T)}) + \frac{\tau}{\tau_i} \sum_{i=1}^{T-1} T_{\tau_i}(\bmu^{(i)},\bmu^{(i+1}) + \tau H(\bmu)+C
 \end{align*}
 where the last equality holds if and only if $R^*_{t_i,t_{i+1}} = \bgamma^{(i)}$ is optimal in the definition of $T_{\tau_i}(\bmu^{(i)},\bmu^{(i+1})$. The claim follows.
 \end{proof}

\section{Proof of Theorem~\ref{thm:convergence}}\label{app:convergence}
Let us recall the statement of Theorem~\ref{thm:convergence} and prove it, by an application of a convergence result proved independently in~\cite{ nitanda2022convex} and~\cite{chizat2022mean}.

\begin{theorem}[Convergence]
Let $\bmu_0\in \Pp(\Xx)^T$ be such that $F(\bmu_0)<\infty$. Then for $\epsilon\geq 0$, there exists a unique solution $(\bmu_s)_{s\geq 0}$ to the MFL dynamics~\eqref{eq:MFL}. For $\epsilon>0$, $\Xx$ the $d$-torus and moreover assuming that $\mu_0$ has a bounded absolute log-density, it holds
\[
F_\epsilon(\bmu_s) - \min F_\epsilon  \leq e^{-Cs}\big(F_\epsilon(\bmu_0) - \min F_\epsilon \big).
\]
where $C=\beta e^{-\alpha/\epsilon}$ for some $\alpha,\beta>0$ independent of $\bmu_0$ and $\epsilon$. Moreover, taking a smooth time-dependent $\epsilon_s$ that decays asymptotically as $\tilde \alpha/\log(s)$ for some $\tilde \alpha>\alpha$, it holds $F_0(\bmu_s)- F_0(\bmu^*)\lesssim \log(\log(s))/\log(s)$ and $\bmu_s$ converges weakly to $\bmu^*$.
\end{theorem}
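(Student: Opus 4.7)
The plan is to cast the theorem as an application of the mean-field Langevin convergence framework of~\cite{nitanda2022convex, chizat2022mean}, adapted to the non-standard feature that $G$ itself is not convex: only $F_0 = G+\tau H$ is (Prop.~\ref{prop:first-properties}). I would therefore write $F_\epsilon = F_0 + \epsilon H$ and view the MFL PDE~\eqref{eq:MFL} as the Wasserstein gradient flow of $F_\epsilon$ with diffusion coefficient $\tau+\epsilon$; the $\tau$-component serves only to convexify the energy, while the additional $\epsilon$-component drives the exponential rate.

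\textbf{Well-posedness.} The drift $\nabla V^{(i)}[\bmu]$ is bounded and Lipschitz in $x$ and in $\bmu$: for the $\Fit$ term this is direct computation once $\bmu^{(i)}$ has a bounded log-density, and for the Schr\"odinger part this is precisely the stability result~\cite{anonymous2022stability}. Standard Picard iteration for McKean--Vlasov SDEs with reflection~\cite{tanaka1979stochastic, lions1984stochastic} then yields existence and uniqueness for~\eqref{eq:MFL-SDE}, hence for~\eqref{eq:MFL}.

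\textbf{Exponential rate for fixed $\epsilon>0$.} I would compute the dissipation
\begin{equation*}
\frac{\d}{\d s}F_\epsilon(\bmu_s) = -\sum_{i=1}^T \int \Big|\nabla\Big(V^{(i)}[\bmu_s] + (\tau+\epsilon)\log \bmu_s^{(i)}\Big)\Big|^2 \d\bmu_s^{(i)},
\end{equation*}
introduce the proximal Gibbs family $\hat\bmu_s^{(i)} \propto \exp(-V^{(i)}[\bmu_s]/(\tau+\epsilon))$, and decompose $F_\epsilon(\bmu_s)-\min F_\epsilon$ as a convexity term (controlled using joint convexity of $F_0$) plus a relative-entropy piece $\sum_i H(\bmu_s^{(i)}\mid \hat\bmu_s^{(i)})$. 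The latter is then bounded by the dissipation via a log-Sobolev inequality applied to each $\hat\bmu_s^{(i)}$; this is the mechanism that produces the exponential decay. Since $\Xx=\mathbb{T}^d$ endowed with the uniform measure satisfies an LSI with an absolute constant, Holley--Stroock gives
\begin{equation*}
\lambda_{\mathrm{LS}}(\hat\bmu_s^{(i)}) \gtrsim \exp\!\big(-\mathrm{osc}(V^{(i)}[\bmu_s])/(\tau+\epsilon)\big).
\end{equation*}
To produce the claimed constant $C=\beta e^{-\alpha/\epsilon}$ I then need $\mathrm{osc}(V^{(i)}[\bmu_s])$ uniformly bounded along the trajectory: the Schr\"odinger contributions are bounded by $\mathrm{osc}(c_{\tau_i})$ via Eq.~\eqref{eq:osc-bound}, while the $\Fit$ contribution is bounded once $\bmu_s^{(i)}$ has bounded log-density. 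The latter is provided by the technical Lem.~\ref{ref:log-density-bound}, which is the only place the torus assumption enters.

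\textbf{Annealing.} For a smoothly time-dependent $\epsilon_s$, I would track $F_{\epsilon_s}(\bmu_s)-\min F_{\epsilon_s}$: the Step~2 rate gives instantaneous decay at rate $\beta e^{-\alpha/\epsilon_s}$, with an additional $|\dot\epsilon_s|\,\sup|H|$ term from the time dependence of the reference functional. With $\epsilon_s\sim \tilde\alpha/\log s$ and $\tilde\alpha>\alpha$, one has $e^{-\alpha/\epsilon_s}= s^{-\alpha/\tilde\alpha}$ with $\alpha/\tilde\alpha<1$, so $\int^s e^{-\alpha/\epsilon_r}\d r\to \infty$ and Gr\"onwall produces decay of $F_{\epsilon_s}(\bmu_s)-\min F_{\epsilon_s}$; comparing to $F_0$ costs $|\epsilon_s (H(\bmu_s)-H(\bmu^*))| =O(1/\log s)$, and an optimized balance yields the advertised $\log\log(s)/\log(s)$ rate. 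Weak convergence of $\bmu_s$ to $\bmu^*$ then follows from tightness on compact $\Xx$ and lower semicontinuity of $F_0$. The main obstacle throughout is the two-sided log-density bound of Lem.~\ref{ref:log-density-bound}: without it the oscillation of the $\Fit$ term in $V$ is uncontrolled, Holley--Stroock produces a vacuous LSI constant, and the Step~2 rate collapses.
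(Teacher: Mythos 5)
Your overall route is the paper's: verify the assumptions of the mean-field Langevin framework of \cite{nitanda2022convex,chizat2022mean}, exploit convexity of $F_0=G+\tau H$ rather than of $G$, obtain a uniform log-Sobolev inequality for the proximal Gibbs measures via Holley--Stroock, and invoke the annealing result for the $\log\log(s)/\log(s)$ rate. However, your key exponential-rate step contains a genuine structural error. You define the proximal Gibbs family as $\hat\bmu_s^{(i)}\propto\exp\big(-V^{(i)}[\bmu_s]/(\tau+\epsilon)\big)$, which corresponds to the splitting $F_\epsilon=G+(\tau+\epsilon)H$; the entropy-sandwich inequality $F_\epsilon(\bmu_s)-\min F_\epsilon\leq(\tau+\epsilon)\sum_i H(\bmu_s^{(i)}|\hat\bmu_s^{(i)})$ that this Gibbs measure feeds into requires \emph{joint convexity of $G$}, which fails here (Prop.~\ref{prop:first-properties}: $G$ is only separately convex). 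You cannot patch this by ``controlling the convexity term using joint convexity of $F_0$'' while keeping that Gibbs measure: convexity of $F_0$ pairs with the splitting $F_\epsilon=(G+\tau H)+\epsilon H$, whose proximal Gibbs measure is $\hat\bmu_s^{(i)}\propto\exp\big(-(V^{(i)}[\bmu_s]+\tau\log\bmu_s^{(i)})/\epsilon\big)$ --- the potential must include the $\tau\log\bmu_s^{(i)}$ term and be divided by $\epsilon$, not $\tau+\epsilon$. Your own formulas betray the mismatch: Holley--Stroock applied to your $\hat\bmu_s^{(i)}$ yields an LSI constant of order $e^{-\mathrm{osc}(V)/(\tau+\epsilon)}$, hence a convergence rate bounded away from zero as $\epsilon\to0$, which would contradict the theorem's $C=\beta e^{-\alpha/\epsilon}$ and render annealing pointless. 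This is precisely why the paper splits the functional \emph{differently} for the two parts of the proof: as $G+(\tau+\epsilon)H$ for well-posedness, and as $(G+\tau H)+\epsilon H$ for convergence.

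A downstream consequence is that you misplace the role of Lem.~\ref{ref:log-density-bound}. It is not needed to control the oscillation of the $\Fit$ contribution to $V$: on compact $\Xx$ one has $g_\sigma\ast\bmu^{(i)}\geq\inf_{\Vert z\Vert\leq D}g_\sigma(z)>0$, so that term is nonnegative and bounded by $e^{D^2/(2\sigma^2)}$ unconditionally. The lemma is needed to bound the oscillation of the extra $\tau\log\bmu_s^{(i)}$ term appearing in the first variation of $F_0$ along the flow --- a term that is simply absent from your Gibbs potential, which is why you had to attach the lemma to the wrong place. For the same reason, your well-posedness step should not hinge on a bounded log-density: the drift is stable without it (the $\Fit$ part by the lower bound just given, the Schr\"odinger part by Prop.~\ref{prop:stability-pot}), and the paper obtains existence and uniqueness for all $\epsilon\geq0$ assuming only $F(\bmu_0)<\infty$, as the theorem statement requires. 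Your annealing sketch (Gr\"onwall with an $|\dot\epsilon_s|\sup|H|$ error term and an optimized balance) is a fair account of what \cite[Thm.~4.1]{chizat2022mean} proves, but it only becomes valid once the fixed-$\epsilon$ rate is established with the correct Gibbs measure and splitting.
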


\begin{proof}
We verify the assumptions of~\cite{chizat2022mean}, noticing that their proof can be adapted without difficulty to our context of families of $T$ probability measures with $T\geq 1$. { In that reference, the objective function is assumed of the form $G+aH$ for some $a>0$ where $H$ is the entropy and $G$ satisfies certain properties. Here we will split the objective function differently between the well-posedness and the convergence result, so that $G$ satisfies the required properties for each result.}

{For the well-posedness of the dynamics, we split the objective function $F_\epsilon$ as $G+(\tau+\epsilon)H$}. \cite[Assumption~1]{chizat2022mean}, about the stability and regularity of the first-variation $V$, is guaranteed  Prop.~\ref{prop:stability-pot} below (the stability and regularity of the component $\delta \Fit/\delta \bmu$ is immediate) and leads to the well-posedness of the dynamics { without requiring $\epsilon>0$ (in fact we could even take $\epsilon \in [-\tau,+\infty[$ and have well-posedness)}.

{For the global convergence guarantee we split the objective function $F_\epsilon$ as $(G+\tau H)+\epsilon H$ because we need the fact that $F_0 = G+\tau H$ is convex.}
\cite[Assumption~2]{chizat2022mean}, which requires convexity of $F_0$ and existence of a minimizer for $F_\epsilon$, is satisfied thanks to Prop.~\ref{prop:first-properties}. For the uniform Log-Sobolev Inequality (LSI) (\cite[Assumption~3]{chizat2022mean}), we first remark that ~\cite[Thm.~7.3]{ledoux1999concentration} states that the uniform distribution over $\Xx$ satisfies LSI with a constant $\rho_0$ that only depends on $D$ the diameter of $\Xx$. 

{Now, the i-th component of the first-variation of $F_0$ is given by $V^{(i)}[\bmu]+\tau \log(\bmu_i)$.} By the expression of Prop.~\ref{prop:first-properties}, the oscillation $\mathrm{osc}(V^{(i)}[\bmu])$ of $V^{(i)}[\bmu]$ (i.e. the difference between its maximum and minimum value over $\Xx$) is bounded (independently of $\epsilon$ and $\bmu_0$). Indeed, the gradient  formula for $\delta \mathrm{Fit}_\sigma/ \delta \mu^{(i)}$ is nonnegative and bounded by $e^{D^2/(2\sigma^2)}$ and by App~\ref{app:EOT} Eq.~\eqref{eq:osc-bound}, the Schr\"odinger potential $\varphi_{i,i+1}$ has an oscillation bounded by $\sup_{x,y} c_{\tau_i}(x,y)-\inf_{x,y} c_{\tau_i}(x,y)$ which is $D^2/2$ when $c(x,y)=\frac12 \Vert y-x\Vert^2$ or a less explicit constant that depends only on the domain $\Xx$ for the cost $c_{\tau_i}$. {Combining these estimates with Lemma~\ref{ref:log-density-bound} (which requires $\Xx$ to be the $d$-torus), we get that the oscillation of $V^{(i)}[\bmu]+\tau \log(\bmu^{(i)})$ is bounded by some $\kappa>0$ independent of $\epsilon$ and $\bmu_0$ (under the assumption that the log-density of $\bmu_0$ is absolutely bounded by $A>0$).}

It follows, by Holley and Strook perturbation criterion~\cite{holley1987logarithmic} that the density proportional to $e^{-{(V^{(i)}[\bmu]+\tau \log(\bmu^{(i)}))}/\epsilon}$ satisfies a LSI with constant $\rho \geq \alpha e^{-\beta/\epsilon}$ for some $\alpha,\beta$ independent of $s,\epsilon,\bmu_0$. Then~\cite[Thm.~3.2]{chizat2022mean} guarantees the exponential convergence with rate $e^{-Cs}$ with $C=2\epsilon\rho$.

{Finally, the convergence result with simulated annealing is a direct application of~\cite[Thm.~4.1]{chizat2022mean} which proves the convergence rate in $\log(\log s)/\log s$. Since in addition the sublevel sets of $F_0$ are weakly compact and the minimizer $\bmu^*$ is unique, standard considerations imply that $\bmu_s$ converges weakly to $\bmu^*$. }
\end{proof}

Let us report a stability result concerning the Schr\"odinger potentials, which is a consequence of a more general result in~\cite{anonymous2022stability} and is used in the proof above.
\begin{proposition}\label{prop:stability-pot}
Assume that $c\in \Cc^2(\Xx\times \Xx)$. There exists $C>0$ such that for all $\mu,\mu',\nu,\nu'\in \Pp(\Xx)$, it holds
$$
\Vert \nabla \varphi -\nabla \varphi' \Vert_\infty+\Vert \nabla \psi -\nabla \psi' \Vert_\infty \leq C\big( W_2(\mu,\mu')+W_2(\nu,\nu')\big).
$$
where $(\varphi,\psi)$ (resp.~$(\varphi',\psi')$) are the Schr\"odinger potentials (see App.~\ref{app:EOT}) associated to the pair of measures $(\mu,\nu)$ (resp.~$(\mu',\nu')$) and $W_2$ is the $2$-Wasserstein distance. 
\end{proposition}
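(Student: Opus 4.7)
The plan is a two-step reduction: first establish $L^\infty$-stability of the Schrödinger potentials $(\varphi,\psi)$ under $W_2$-perturbations of the marginals; then transfer this to $L^\infty$-stability of their gradients via the conditional-expectation representation~\eqref{eq:potentials-derivative1} of Appendix~\ref{app:EOT}. Compactness of $\Xx$ together with $c\in\Cc^2$ furnish uniform a priori controls: after fixing a normalization (e.g.~$\int\varphi\,\d\mu=0$), the oscillation bound~\eqref{eq:osc-bound} confines $\varphi,\psi$ to a compact interval depending only on $c$ and $\diam(\Xx)$, and~\eqref{eq:potentials-derivative1} yields $\|\nabla\varphi\|_\infty,\|\nabla\psi\|_\infty\leq\|\nabla c\|_\infty$. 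These bounds will be used throughout to uniformly bound and Lipschitz-bound the exponential kernels appearing below.

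\textbf{Step 1 ($L^\infty$-stability of potentials).} Subtracting the Schrödinger system~\eqref{eq:schrodinger-system} for $(\varphi,\psi)$ and $(\varphi',\psi')$, I insert the mixed term $\tau\log\int e^{(\psi(y)-c(x,y))/\tau}\d\nu'(y)$ to split $\varphi(x)-\varphi'(x)$ into a change-of-measure part (same integrand, $\nu$ replaced by $\nu'$) and a change-of-integrand part (same measure $\nu'$, $\psi$ replaced by $\psi'$). The change-of-measure part is $\lesssim W_1(\nu,\nu')\leq W_2(\nu,\nu')$ by Kantorovich-Rubinstein duality and the uniform Lipschitz bound on $y\mapsto e^{(\psi(y)-c(x,y))/\tau}$; the change-of-integrand part is $\leq\|\psi-\psi'\|_\infty$ via the mean-value inequality applied to $\tau\log$. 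A symmetric treatment of the $\psi$-equation yields the analogue in $W_2(\mu,\mu')$ and $\|\varphi-\varphi'\|_\infty$. To close the resulting self-referential system I invoke the Hilbert projective-metric contraction of the doubly-applied Sinkhorn operator (Birkhoff/Franklin-Lorenz), whose rate depends only on $\mathrm{osc}(c)/\tau$, arriving at $\|\varphi-\varphi'\|_\infty+\|\psi-\psi'\|_\infty\leq C_1\bigl(W_2(\mu,\mu')+W_2(\nu,\nu')\bigr)$.

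\textbf{Step 2 (transfer to gradients).} Starting from $\nabla\varphi(x)=\int\nabla_x c(x,y)\,g(x,y)\d\nu(y)$ with $g(x,y)=e^{(\varphi(x)+\psi(y)-c(x,y))/\tau}$, I decompose
\[
\nabla\varphi(x)-\nabla\varphi'(x) = \int\nabla_x c(x,y)[g(x,y)-g'(x,y)]\d\nu(y) + \int\nabla_x c(x,y)g'(x,y)(\d\nu-\d\nu')(y).
\]
The second integral is $\lesssim W_2(\nu,\nu')$ by Kantorovich-Rubinstein, since $y\mapsto\nabla_x c(x,y)g'(x,y)$ is uniformly Lipschitz in $y$ with constant depending only on $\|c\|_{\Cc^2}$, $\tau$, and the a priori oscillation bounds. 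For the first integral, the mean-value inequality applied to $\xi\mapsto e^{\xi/\tau}$ on the bounded interval containing the exponents gives $|g(x,y)-g'(x,y)|\lesssim|\varphi(x)-\varphi'(x)|+|\psi(y)-\psi'(y)|$, and Step~1 concludes. A symmetric argument controls $\nabla\psi-\nabla\psi'$ and delivers the stated bound.

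\textbf{Main obstacle.} The genuinely delicate point is closing the self-referential system in Step~1: the naive estimate only reads $\|\varphi-\varphi'\|_\infty+\|\psi-\psi'\|_\infty \leq C\bigl(W_2(\mu,\mu')+W_2(\nu,\nu')+\|\varphi-\varphi'\|_\infty+\|\psi-\psi'\|_\infty\bigr)$ with $C$ not necessarily small, so one genuinely needs a strict contraction of the Sinkhorn map with a rate controlled uniformly in $(\mu,\nu)$. Compactness of $\Xx$ and boundedness of $c$ make this uniformity possible through the Hilbert projective metric (equivalently, a uniform spectral gap for the regularized transfer operator)---this is precisely the content of the general stability theorem of~\cite{anonymous2022stability} that the authors invoke.
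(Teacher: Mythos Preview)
The paper does not give a self-contained proof of this proposition: it merely states the result as ``a consequence of a more general result in~\cite{anonymous2022stability}'' and uses it as a black box in the proof of Theorem~\ref{thm:convergence}. There is thus no paper-proof to compare against beyond that citation.

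Your sketch is a correct outline of the standard argument and is presumably close in spirit to what the cited reference does. The two-step reduction (potentials first, then gradients via the conditional-expectation formula~\eqref{eq:potentials-derivative1}) is the natural route, and you correctly isolate the only genuine difficulty: the direct comparison of the two Schr\"odinger systems produces a coefficient exactly $1$ in front of $\|\psi-\psi'\|_\infty$, so a strict-contraction mechanism is needed to close the loop. The Hilbert projective metric / Birkhoff--Hopf argument is precisely the right tool on compact $\Xx$ with bounded $c$: the kernel $e^{-c/\tau}$ is uniformly bounded above and below, yielding a contraction rate $\rho<1$ depending only on $\mathrm{osc}(c)/\tau$, and the perturbed fixed-point inequality $(1-\rho)\,d_H(\psi,\psi')\leq d_H\bigl(S_{\mu,\nu}(\psi'),S_{\mu',\nu'}(\psi')\bigr)$ then bounds the left side by the change-of-measure term you already controlled by $W_2$.

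One clarification worth making explicit: the $L^\infty$ stability of the potentials can only hold modulo the additive gauge $(\varphi+\kappa,\psi-\kappa)$, and your chosen normalization $\int\varphi\,\d\mu=0$ itself depends on the varying measure $\mu$, so ``$\|\varphi-\varphi'\|_\infty$ small'' is not quite the right statement. This is harmless for Step~2 because the density $g=e^{(\varphi+\psi-c)/\tau}$ is gauge-invariant; what you actually need and what the Hilbert-metric argument delivers is smallness of $\mathrm{osc}(\psi-\psi')$, from which a short computation using the Schr\"odinger equation gives pointwise control of $(\varphi+\psi)-(\varphi'+\psi')$ and hence of $g-g'$.
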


\begin{lemma}\label{ref:log-density-bound}
Let $(\bmu_s)_{s\geq 0}$ be the solution of the Mean-Field Langevin Dynamics for $\epsilon \geq 0$ and assume that the absolute log-density of $\bmu^{(i)}_0$ is bounded by $A>0$ for each $i\in \{1,\dots ,T\}$. Let $\Xx$ be the $d$-torus. Then there exists $A'>0$ (independent of $\epsilon$) such that the absolute log-density of $\bmu^{(i)}_s$ is bounded by $A'$, for all $s\geq 0$ and $i\in \{1,\dots ,T\}$.
\end{lemma}
\begin{proof}
The $i$-th component of the stochastic process associated to the Mean-field Langevin dynamics solves $\d X^{(i)}_s = -\nabla V^{(i)}[\bmu_s](X^{(i)}_s)\d s +\sqrt{2(\tau+\epsilon)}\d B_s^{(i)}$ where $(B_s^{(i)})$ is a Brownian motion independent for each $i$ and $\bmu_s = (\Law(X_s^{(1)}),\dots, \Law(X_s^{(T)}))$ (the boundary reflection term is absent since we are considering the torus). Let $S>0$ be an arbitrary time and let $P^{(i)} \in \Pp(\Cc([0,S];\Xx))$ be the law of $X^{(i)}$ over the time interval $[0,S]$. By Girsanov's formula (see e.g.~\cite[Sec.~4.2]{lavenant2021towards}), it holds
\begin{align*}
    \frac{\d P^{(i)}}{\d W^{(\tau+\epsilon)}}(X) = \frac{\d P_0^{(i)}}{\d \vol}(X_0)\exp\left(\frac{2}{\tau+\epsilon} \Big( \int_0^S -\nabla V^{(i)}[\bmu_s](X_s)\d X_s -\frac12 \int_0^S \Vert \nabla V^{(i)}[\bmu_s](X_s)\Vert^2\d s \Big)\right)
\end{align*}
where $\vol$ is the uniform distribution over $\Xx$, and by Ito's formula
\begin{multline*}
\int_0^S -\nabla V^{(i)}[\bmu_s](X_s)\d X_s \\= V^{(i)}[\bmu_0](X_0)-V^{(i)}[\bmu_S](X_S) + \int_0^S \Big( (\partial_s V^{(i)}[\bmu_s])(X_s) + (\tau +\epsilon) \Delta V^{(i)}[\bmu_s](X_s) \Big)\d s.
\end{multline*}
Since, for $S$ fixed, all the quantities in the exponential are uniformly bounded (in particular, for the term involving $\partial_s V^{(i)}[\bmu_s]$ this follows from Prop.~\ref{prop:stability-pot} which implies $\int_0^S \Vert \partial_s V^{(i)}[\bmu_s]\Vert_\infty \d s \leq C\sum_{i=1}^T \int_0^S (\int_\Xx \Vert v_s(x)\Vert^2\d\bmu^{(i)}_s(x))^{1/2} \d s\leq C\sqrt{ST} ( \sum_{i=1}^T \int_0^S\int_\Xx \Vert v_s(x)\Vert^2\d\bmu^{(i)}_s(x))^{1/2}\leq C\sqrt{ST} (F_\epsilon(\bmu_0)-F_\epsilon(\bmu_S))^{1/2}$ where $v_s$ is the Wasserstein derivative of $(\bmu^{(i)}_s)_s$ and using facts from gradient flows theory~\cite{ambrosio2008gradient}). This shows that the transition kernel of the process $X^{(i)}$ is upper and lower bounded by the heat kernel over $\Xx$. In particular, the density of $\bmu^{(i)}_s$ is upper and lower bounded by positive constants over $[0,S]$. For times $S'$ larger than $S$, we similarly have that $X^{(i)}_{S'}$ is obtained from $X^{(i)}_{S'-S}$ by a Markov process which is comparable to the heat diffusion on $\Xx$ and thus its log-density is absolutely bounded. We conclude that the upper and lower bounds on $\log \bmu^{(i)}_s$ are uniform in time.
\end{proof}

\section{Proof of Proposition \ref{prop:first-properties}}

\begin{proposition}
The function $G$ is {convex separately in each of its input (but not jointly)}, weakly continuous and its \emph{first-variation} is given for $\bmu\in \Pp(\Xx)^T$ and $i\in [T]$ by
\begin{align*}
    V^{(i)}[\bmu] = \frac{\delta \mathrm{Fit}}{\delta \bmu^{(i)}}[\bmu]+\frac{\varphi_{i,i+1}}{t_{i+1}-t_i} + \frac{\psi_{i,i-1}}{t_{i}-t_{i-1}},\quad \frac{\delta \mathrm{Fit}}{\delta \bmu^{(i)}}[\bmu]:x \mapsto  - \frac{\Delta t_i}{\lambda} \int \frac{g_\sigma(x-y)}{(g_{\sigma}\ast \bmu^{(i)})(y)}\d\hat \mu_{t_i}(y)
\end{align*}
where $(\varphi_{i,j},\psi_{i,j})\in \Cc^\infty(\Xx)$ are the Schr\"odinger potentials for $T_{\tau_i}(\bmu^{(i)},\bmu^{(j)})$, with the convention that the corresponding term vanishes when it involves $\psi_{1,0}$ or $\varphi_{T,T+1}$.
The function $F$ {is jointly convex} and admits a unique minimizer $\bmu^*$, which has an absolutely continuous density (again denoted by $\bmu^*$) characterized by
\[
(\bmu^*)^{(i)} \propto e^{-V^{(i)}[\bmu^*]/\tau}, \quad \text{for $i\in [T]$}.
\]
\end{proposition}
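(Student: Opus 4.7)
The proposition bundles together several claims about $G$ and $F$ that I would prove roughly in the following order. First, for the structural properties of $G$: the functional $\mathrm{Fit}$ decomposes as a sum of $\mathrm{Fit}_\sigma(\bmu^{(i)}|\hat\mu_{t_i})$ terms, each of which is convex in $\bmu^{(i)}$ since it is the $\hat\mu_{t_i}$-integral of $-\log$ applied to a linear (hence affine) functional of $\bmu^{(i)}$. The entropic OT cost $T_{\tau_i}(\mu,\nu)$ is well-known to be convex \emph{separately} in each marginal (one way: rewrite it in the dual form~\eqref{eq:dual}, which is the pointwise supremum of functions that are linear in $\mu$ for fixed $\nu$), but it fails to be jointly convex because the product measure $\mu\otimes\nu$ appears as the reference in the entropy term. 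Since each marginal $\bmu^{(i)}$ appears in at most two consecutive $T_{\tau_i}$ terms, separate convexity of $G$ follows. Weak continuity of $G$ is the combination of weak continuity of $\mathrm{Fit}_\sigma$ (because $g_\sigma\ast\bmu^{(i)}$ is continuous in $\bmu^{(i)}$ for the weak topology, uniformly in $y$) and weak continuity of entropic OT with bounded continuous cost (a standard fact, cf.~Appendix~\ref{app:EOT}).

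Next, I would compute the first-variation component-by-component. For the $\mathrm{Fit}_\sigma$ contribution I would perturb $\bmu^{(i)} \to (1-\epsilon)\bmu^{(i)} + \epsilon\bnu^{(i)}$ inside the log-integral and differentiate at $\epsilon=0$; using $\int e^{-\|x-y\|^2/(2\sigma^2)}\d\bmu^{(i)}(x) = (2\pi\sigma^2)^{d/2}(g_\sigma\ast\bmu^{(i)})(y)$ the claimed formula with integrand $-g_\sigma(x-y)/(g_\sigma\ast\bmu^{(i)})(y)$ drops out, up to the prefactor $\Delta t_i/\lambda$ coming from the definition of $\mathrm{Fit}$. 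For the $T_{\tau_i}$ contribution, the envelope theorem applied to the dual~\eqref{eq:dual} shows that the first-variation with respect to the first (resp.~second) marginal is the Schrödinger potential $\varphi$ (resp.~$\psi$), yielding the two boundary terms $\varphi_{i,i+1}/(t_{i+1}-t_i)$ and $\psi_{i,i-1}/(t_i-t_{i-1})$ with the stated convention for $i=1,T$.

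For joint convexity of $F$ I would invoke Theorem~\ref{thm:representer}: the representer formula identifies $F(\bmu)$ with the partial minimum of the jointly convex functional $\Ff$ over $\{R\in\Pp(\Omega):R_{t_i}=\bmu^{(i)}\}$, and partial minimization of a jointly convex functional over marginals (a linear constraint in $R$) yields a convex function of $\bmu$. Existence of a minimizer of $F$ then follows from lower semicontinuity of $F$ (lsc of $\tau H$, continuity of $G$), weak compactness of $\Pp(\Xx)^T$ since $\Xx$ is compact, and the fact that sublevel sets of $\tau H$ are concentrated on absolutely continuous measures (so the minimum is attained by an absolutely continuous $\bmu^*$). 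Uniqueness follows from strict convexity: $\tau H$ is strictly convex along affine segments of absolutely continuous measures, and combined with convexity of $G$ this rules out two distinct minimizers.

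Finally, the Gibbs characterization $\bmu^{*(i)}\propto e^{-V^{(i)}[\bmu^*]/\tau}$ is the first-order optimality condition. Writing $F_\epsilon$-style perturbations $\bmu^{*(i)} + \epsilon(\bnu^{(i)} - \bmu^{*(i)})$, the stationarity condition reads $V^{(i)}[\bmu^*] + \tau(1+\log\bmu^{*(i)}) = \text{const}$ almost everywhere on the support of $\bmu^{*(i)}$, which rearranges to the stated exponential form; strict positivity of the exponential combined with the fixed-point nature of the equation (and boundedness of $V^{(i)}[\bmu^*]$, which follows from the oscillation bound in~\eqref{eq:osc-bound} for the Schrödinger potentials plus boundedness of $\delta\mathrm{Fit}/\delta\bmu^{(i)}$ once $\bmu^{*(i)}$ is bounded below) shows the density is everywhere positive and the ``proportional'' notation is justified. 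The main subtleties I expect are (i) justifying differentiation under the integral in the $\mathrm{Fit}_\sigma$ first-variation without assuming $\bmu^{(i)}$ has a density (handled by the lower-semicontinuity of convolution with $g_\sigma$), and (ii) making the envelope-theorem step for $T_{\tau_i}$ rigorous, which is where I would lean on the standard Schrödinger-potential regularity recalled in Appendix~\ref{app:EOT}.
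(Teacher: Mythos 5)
Your treatment of $G$ — separate convexity via the dual formulation \eqref{eq:dual}, failure of joint convexity due to the product reference measure, weak continuity, and the two first-variation computations (direct perturbation for $\mathrm{Fit}_\sigma$, envelope theorem on the dual for the Schr\"odinger terms) — matches the paper in substance; the paper simply delegates the entropic-OT facts to known results and calls the $\mathrm{Fit}$ part immediate. Where you genuinely diverge is the joint convexity of $F$. The paper proves it directly by the change of variables $\tilde\varphi=\varphi+\tau\log\rho_\mu$ in \eqref{eq:dual}, which exhibits $(\mu,\nu)\mapsto T_\tau(\mu,\nu)+\tau H(\mu)$ as a supremum of jointly linear forms; summing these convexified blocks (each entropy $\tau H(\bmu^{(i)})$, $i\le T-1$, being absorbed by exactly one Schr\"odinger term, since $\tau_i/(t_{i+1}-t_i)=\tau$) gives convexity of $F$. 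You instead identify $F$, up to an additive constant, with the infimal projection of the jointly convex $\Ff$ along the linear marginal map $R\mapsto(R_{t_1},\dots,R_{t_T})$, using Thm~\ref{thm:representer} (equivalently Lem~\ref{lem:entropy-reorganize}). This is legitimate — there is no circularity, since the representer theorem rests only on the entropy-reorganization lemma and not on this proposition — and arguably more conceptual; the paper's computation is more self-contained and makes the entropy bookkeeping explicit, which matters since the split $F=G+\tau H$ is reused in the convergence analysis.

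There is, however, a genuine gap in your uniqueness step. You argue that $\tau H$ is strictly convex and, ``combined with convexity of $G$,'' rules out two minimizers — but $G$ is \emph{not} jointly convex (the proposition itself asserts this), so ``convex plus strictly convex'' is not available. Nor can the entropy serve double duty: in the convexity argument the terms $\tau H(\bmu^{(i)})$ for $i\le T-1$ are exactly consumed in convexifying the Schr\"odinger terms, so only $\tau H(\bmu^{(T)})$ survives as a spare strictly convex summand, which would only handle minimizers differing in the last marginal. The paper avoids this by returning to path space: $\Ff$ is strictly convex (strict convexity of $H(\cdot|W^\tau)$), hence has at most one minimizer, and by Thm~\ref{thm:representer} any minimizer of $F$ is the marginal family of a minimizer of $\Ff$, so $F$ has at most one minimizer; existence then follows by the direct method, as you also say. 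Note that your own infimal-projection setup yields essentially the same fix without the detour you attempted: if $\bmu_0\neq\bmu_1$ were two minimizers with optimal lifts $R_0\neq R_1$ (distinct because their marginals are), strict convexity of $H(\cdot|W^\tau)$ along the segment from $R_0$ to $R_1$ gives $F\big(\tfrac{\bmu_0+\bmu_1}{2}\big)<\tfrac12 F(\bmu_0)+\tfrac12 F(\bmu_1)$, a contradiction. Your remaining items — attainment on absolutely continuous measures and the Gibbs characterization via the first-order condition $V^{(i)}[\bmu^*]+\tau\log(\bmu^{*(i)})=\mathrm{const}$, with positivity controlled through the oscillation bounds on $V^{(i)}$ — agree with the paper, which cites the literature for the rigorous version of that last step.
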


\begin{proof}
The properties and formulas for $G$ and its first-variation follow from those for $T_\tau$ which are well-known (see~\cite[Prop.~2]{feydy2019interpolating}) and for $\Fit$ which are immediate. {In particular, $T_\tau$ is convex separately in each of its variables as a supremum of linear forms but not jointly because of the product measure in Eq.~\eqref{eq:dual}. The joint convexity of $F$ can be seen by a change of variable in Eq.~\eqref{eq:dual}: if $\mu$ admits a density $\rho_\mu$ with respect to Lebesgue, letting $\tilde \varphi =\varphi +\tau \log(\rho_\mu)$ it holds
\begin{align*}
T_\tau(\mu,\nu) &= \max_{\varphi\in L^1(\mu),\psi\in L^1(\nu)} \int \varphi \d\mu + \int \psi \d\nu +\tau \Big(1 - \int e^{(\varphi(x)+\psi(y)- c(x,y))/\tau}\d \mu(x)\d\nu(y)\Big).\\
&= \max_{\tilde \varphi\in L^1(\mu),\psi\in L^1(\nu)} \int \tilde \varphi \d\mu + \int \psi \d\nu +\tau \Big(1 - \int e^{(\tilde \varphi(x)+\psi(y)- c(x,y))/\tau}\d x\d\nu(y)\Big) - \tau H(\mu) 
\end{align*}
Thus, as a supremum of linear forms $(\mu,\nu)\mapsto T_\tau(\mu,\nu)+\tau H(\mu)$ is a convex function. Now $F$ is a sum of $T-1$ terms of this form and convex functions, and is thus convex.
For the uniqueness of the minimizer of $F$, note that $\Ff$ is strictly convex and thus admits at most one minimizer. Since by Thm.~\ref{thm:representer}, any minimizer of $F$ can be mapped to a unique minimizer of $\Ff$, it follows that $F$ has at most one minimizer. Its existence is guaranteed by the direct method in the calculus of variation, because $F$ is weakly lower-semicontinuous and the set $\Pp(\Xx)^T$ is weakly compact.
}
Finally, the characterization of the minimizer can be formally deduced by writing the first order optimality condition $V^{(i)}[\bmu^*] + \tau \log(\bmu^{*(i)})=0$. The rigorous argument, which is standard, can be found in a similar context e.g.~in~\cite[Lem.~10.4]{mei2018mean}.]
\end{proof}

\section{Solving the Schr\"odinger Bridge Problems with Sinkhorn's algorithm}\label{app:sinkhorn}
At each iteration, in order to compute $V[\hat \mu[k]]$, one needs to compute the Schr\"odinger potentials $(\varphi_{i,i+1},\psi_{i,i+1})$ associated to the $T-1$ Schr\"odinger bridges problems $T_\tau(\hat \bmu^{(i)},\hat \bmu^{(i+1)})$ (see Prop.~\ref{prop:first-properties}). Among the various algorithms that can solve this problem~\cite{peyre2019computational}, let us focus our discussion on the well-studied Sinkhorn's algorithm, which is alternate block maximization on the dual of Eq.~\eqref{eq:ROT}. 

Given two discrete probability measures $\hat \mu_m =\sum_{i=1}^m p_i \delta_{x_i}$ and $\hat \nu_m =\sum_{i=1}^m q_i \delta_{y_i}$ we define the cost matrix with entries $c_{i,j} = c(x_i,y_j)$ (which we approximate with $\frac12 \Vert x_i-y_j\Vert^2$ using Varadhan's formula). The iterates $u[\ell],v[\ell]\in \RR^m$, $\ell\geq 1$ of Sinkhorn's algorithm are defined as :
\begin{align*}
    u_i[\ell] = -\tau \log\Big( \sum_{j=1}^m e^{(v_j[\ell -1]-c_{i,j})/\tau} q_j  \Big) &&\text{and}&& v_j[\ell] = -\tau \log\Big( \sum_{i=1}^m e^{(u_i[\ell]-c_{i,j})/\tau} p_i  \Big).
\end{align*}
This algorithm converges in value at a rate $O(1/(\tau k))$, see~\cite{dvurechensky2018computational,altschuler2017near}. In practice, Sinkhorn's iterations could be further sped up with non-linear acceleration methods~\cite{thibault2021overrelaxed}. Upon convergence, one can recover the Schr\"odinger potential $\varphi,\psi \in \Cc^\infty(\Xx)$ via the formula
\begin{align}\label{eq:sinkhorn-potentials}
\varphi(x) &= -\tau \log\Big( \sum_{j=1}^m e^{(v_j^*-c(x,y_j))/\tau} q_j  \Big),\quad
\psi(y) &= -\tau \log\Big( \sum_{j=1}^m e^{(u_i^*-c(x_i,y))/\tau} p_i  \Big).
\end{align}
Moreover the minimizer in Eq.~\eqref{eq:ROT}, needed to recover $P^*$ by Thm.~\ref{thm:representer}, is given by
$
\gamma = \sum_{i,j} e^{(u^*_i+v^*_j-c_{i,j})/\tau}p_iq_j \delta_{(x_i,y_j)}.
$
Those consideration suggest two methods to implement Eq.~\eqref{eq:NPGD}:
\begin{enumerate}[label=(\roman*)]
    \item estimate $\nabla G_m$ with automatic differentiation by backpropagating through a fixed number of Sinkhorn's iterations, or 
    \item use the formula for $\nabla V$ given in Prop.~\ref{prop:first-properties} and plug-in the potentials of Eq.~\eqref{eq:sinkhorn-potentials}.
\end{enumerate}

These alternatives are discussed in~\cite[Sec.~5.3]{ablin2020super} where $(\varphi,\psi)$ are computed as a subroutine. There, the conclusion is that option~{(ii)} is slightly more efficient, and this is the method we implemented.

\section{Simulated Annealing}\label{app:simulated-annealing}

A standard heuristic to accelerate diffusion-based algorithms with a temperature parameter $\tau$ is the so-called \emph{simulated annealing} method~\cite{bertsimas1993simulated}, which consists in starting from a large value $\tau_0$ and slowly decreasing it towards the desired value while the algorithm runs. In our context, a larger value for $\tau$ accelerates the convergence of both Sinkhorn's algorithm and the MFL dynamics.


{While Theorem \ref{thm:convergence} guarantees convergence for a temperature that decays sufficiently slowly, for practical purposes we opt for a faster rate.} We used simulated annealing with geometric decay $\tau_t = \max\{cr^t, \tau_{f}\}$ for $r\in {]0,1[}$ and $c>0$ in order to quickly reach the desired temperature $\tau_f>0$. Empirically, we find that also allowing the step size $\eta$ and the (squared) data-fitting bandwidth $\sigma^2$ to scale with the temperature leads to further acceleration of the MFL dynamics, {especially early on in the optimisation}. We illustrate this for the example of Section~\ref{sec:fig1_example} in Figure \ref{fig:annealing}. In this experiment, particles of the MFL are started from $\mathcal{N}(0, 1.0)$. Consequently, some particles are distant from the data and require MFL to be run for an large number of iterations to converge. On the other hand, simulated annealing in $(\tau, \sigma, \eta)$ for the first 500 iterations with $\tau_0 = 5\tau$, followed by 2,000 iterations without annealing allows for the MFL dynamics to reach convergence much faster. {The brief annealing phase allows for particles to quickly move away from their initial distribution towards what is a refined initial condition for a subsequent optimisation without annealing.}

As can be seen in Figure \ref{fig:annealing}(b), the result of MFL with annealing is slightly less noisy. To prevent noise at high temperature from causing particles in the MFL dynamics to stray far away from the observed data, we add a confining potential to the objective \eqref{eq:G-functional}
\begin{align*}
\operatorname{Confine}_{\sigma}(R, \hat{\mu}) = -\int \d R(y) \log \left[ \int e^{\frac{-1}{2\sigma^2} \| x - y \|_2^2 } \d \hat{\mu}(x) \right],
\end{align*}
where we have written $R = T^{-1} \sum_{i = 1}^T R_{t_i}$ and $\hat{\mu} = T^{-1} \sum_{i = 1}^T \hat{\mu}_{t_i}$ to be the mixtures (over time) respectively of the reconstructed and observed marginals. This has the effect of penalizing particles in the MFL dynamics which are far away from any observation. In the annealing examples, we used $\sigma = 5.0$ for the confining potential bandwidth.

{We also investigated the setting where the temperature $\tau$ was fixed but annealing was carried out in the additional entropy term $\varepsilon \to 0$, as is analyzed in Theorem \ref{thm:convergence}. As we can see in Figure \ref{fig:annealing}, this actually results in a MFL dynamics that converges slower than MFL without annealing due to the additional level of noise injected. Interestingly, these results suggest that the lack of strong convexity when $\varepsilon = 0$ may not affect convergence in practice.}

\begin{figure}[h]
    \centering
    \begin{subfigure}{0.75\linewidth}
        \includegraphics[width = \linewidth]{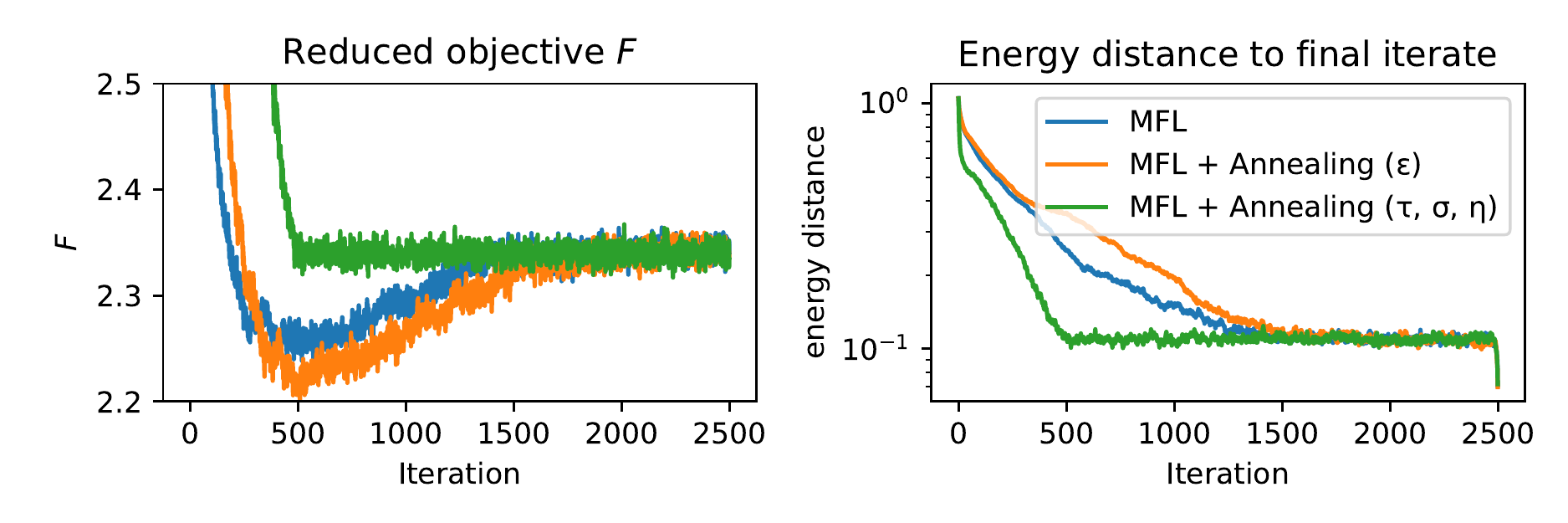}
        \caption{}
    \end{subfigure}
    \begin{subfigure}{\linewidth}
        \includegraphics[width = \linewidth]{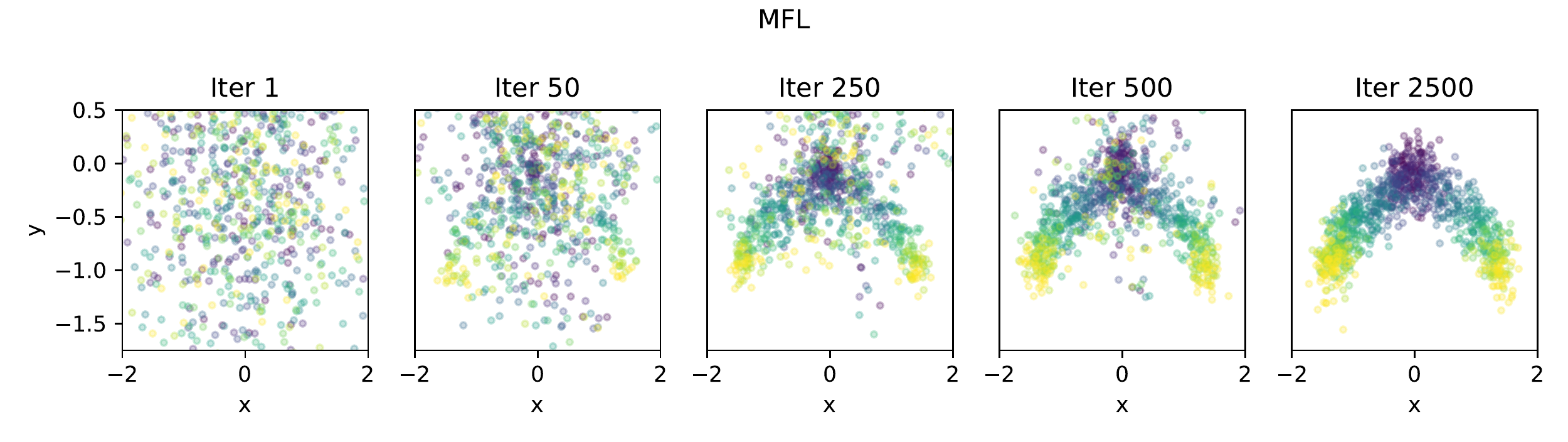}
        \caption{}
    \end{subfigure}
    \begin{subfigure}{\linewidth}
        \includegraphics[width = \linewidth]{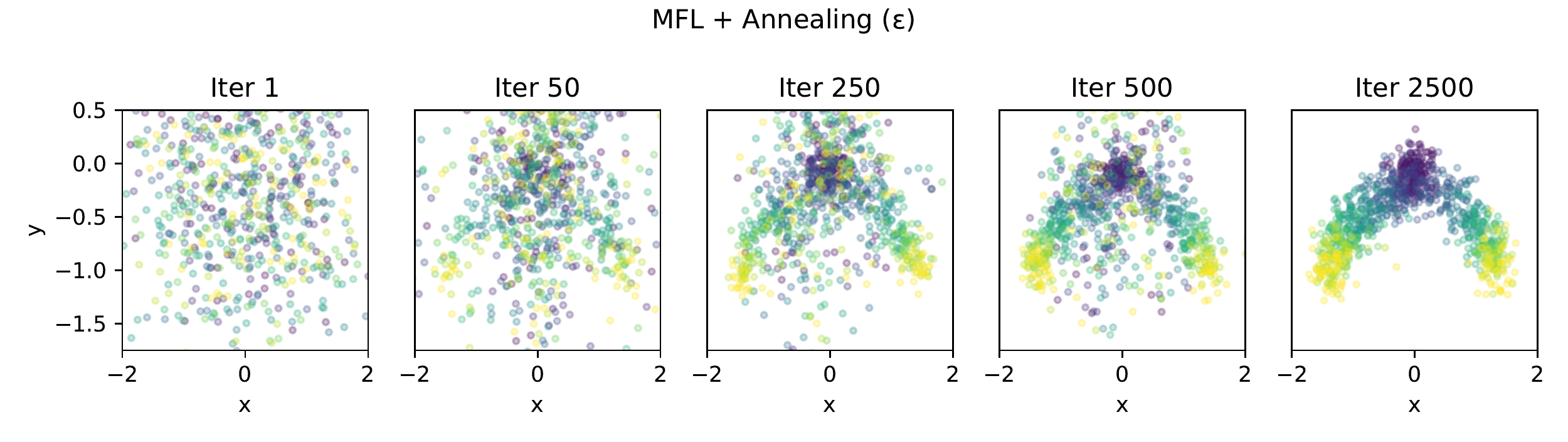}
        \caption{}
    \end{subfigure}
    \begin{subfigure}{\linewidth}
        \includegraphics[width = \linewidth]{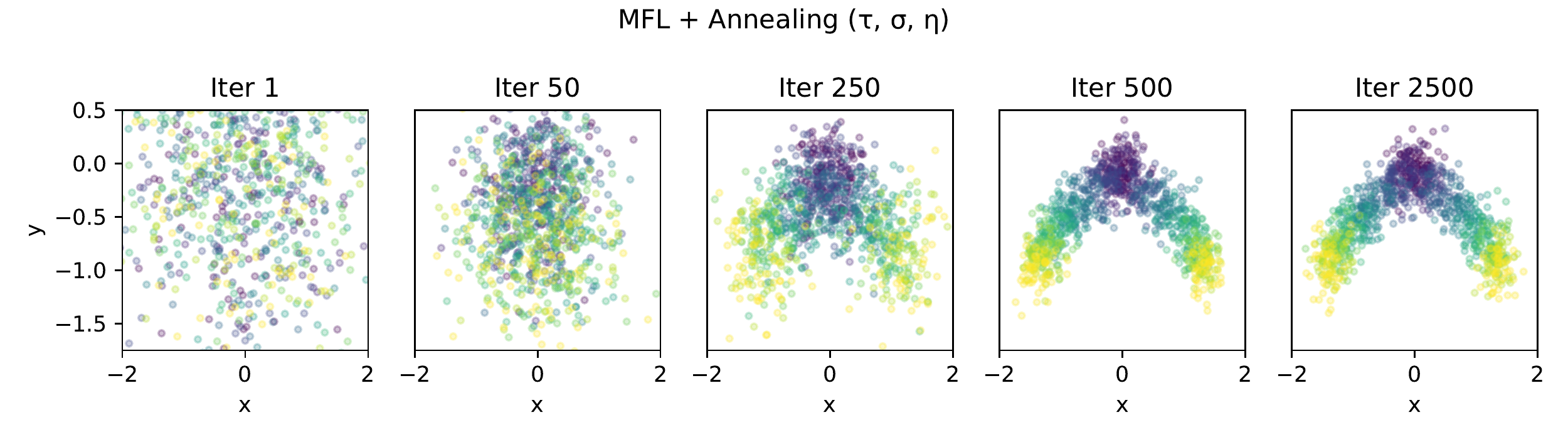}
        \caption{}
    \end{subfigure}
    \caption{(a) {Plot of the (i) reduced objective $F$ \eqref{eq:reduced-objective} (compared to Fig.~\ref{fig:dynamics} the $y$-axis is zoomed in), and (ii) energy distance to final iterate, over 2500 iterations for MFL dynamics without annealing, with annealing in $\epsilon$ in the first 1000 iterations, and with annealing jointly in $\tau, \sigma, \eta$ over the first 500 iterations. (b, c, d) MFL iterates shown in the first 2 dimensions, without annealing, with annealing in $\epsilon$, and with annealing in $\tau, \sigma, \eta$.}}
    \label{fig:annealing}
\end{figure}

\section{Simulation details}

All numerical experiments were run using a CPU-based implementation of MFL dynamics. A copy of the code to reproduce the figures in this article is available at \url{https://github.com/zsteve/mfl}.

In practice, we observed that the particles of the discretized MFL dynamics remain at a small bounded distance from the support of the observations throughout; we thus did not explicitly  enforce the reflecting or periodic boundary.  

\label{sec:simdetails}
\subsection{Additional details for Section \ref{sec:fig1_example}}

We consider a bifurcating stochastic process (see Figure \ref{fig:cover}) in ambient space $\mathcal{X} \subset \mathbb{R}^{10}$, following \eqref{eq:SDE} with $\tau = 1/4$ and  time-dependent potential $\Psi(t, x) = \frac{1}{2} (x_1 - 1.5)^2 (x_1 + 1.5)^2 + 10(x_2 + t)^2 + 10 \sum_{k = 3}^{10} x_k^2$.
Starting from an initial distribution $X_0 \sim \mathcal{N}(0, 0.1^2)$, particles are simulated over $t \in [0, 1.25]$ and were independently sampled at 10 evenly spaced timepoints $t_i, 1 \leq i \leq 10$. 

The discretized MFL dynamics described in Section \ref{sec:discretization} was applied to each simulated dataset with $m = 100$ particles per timepoint and using the data fitting functional \eqref{eq:fit} with bandwidth $\sigma = 0.5$. Particles in the MFL dynamics were started from $\mathcal{N}(0, 0.1^2)$ and evolved following \eqref{eq:NPGD} with $\eta = 0.1$ for $2,500$ iterations. We repeated this for the following parameter values:

\begin{itemize}
    \item $\lambda$ (MFL): 0.0125, 0.025, 0.05, 0.1, 0.2,
    \item $\lambda$ (gWOT): 0.000625, 0.00125, 0.0025, 0.005, 0.01,
    \item $\varepsilon_\mathrm{DF}$ (gWOT): 0.01,
    \item $N$ (both): 1, 2, 4, 8, 16, 32, 64.
\end{itemize}

The approximate ground truth is taken to be a simulated dataset with 500 particles per timepoint. That is, we computed $( T^{-1} \sum_{i = 1}^T D^2(\mu_{t_i}, R_{t_i}))^{1/2}$ where $\mu_{t_i}$ and $R_{t_i}$ are respectively the ground truth and reconstructed marginals at time $t_i$, and the squared Energy Distance \cite{szekely2013energy} between two measures $(\alpha, \beta)$ is defined to be 
\begin{equation}\label{eq:energydistance}
    D^2(\alpha, \beta) = 2 \EE_{X \sim \alpha, Y \sim \beta} \| X - Y \| - \EE_{X, X' \sim \alpha} \| X - X' \| - \EE_{Y, Y' \sim \beta} \| Y - Y' \|.
\end{equation}


\subsection{Additional details for Section~\ref{sec:mass-variation}}

The ambient space $\mathcal{X}$ is taken as in~\cite{lavenant2021towards}, and dynamics follow \eqref{eq:SDE} with $\tau = 1$ and $\Psi(t, x) = 1.25 \| x - x^{(0)} \|_2^2 \| x - x^{(1)} \|_2^2 + 10 \sum_{k = 3}^{10} x_k^2$, 
where $x^{(0)} = [1.4, 1.4, 0, \ldots, 0]$, $x^{(1)} = [-1.25, -1.25, 0, \ldots, 0]$. We prescribe a growth rate $g(t, x) = 10(\tanh(2x_0) + 1)/2$ so that particles grow faster in the region $x_0 > 0$. Particles are started from $X_0 \sim \mathcal{N}(0, 0.1^2)$, and 10 timepoints with 50 particles each were sampled on the interval $t \in [0, 0.5]$. We fit MFL dynamics both with and without the modification for branching described in Section~\ref{sec:mass-variation} with $\lambda = 0.025$ and $\rho = +\infty$ (since the growth rate is known exactly). Other hyperparameters were taken to be the same as in Section \ref{sec:fig1_example}.

\section{Reprogramming dataset pre-processing details}\label{sec:preprocessing}

For each set of subsampled snapshots, expression matrices were first centered and projected into 10 PCA dimensions. Similarly as in \cite{lavenant2021towards}, a set of scaling factors were computed:
\begin{itemize}
    \item Pairwise scaling factors: $\sigma^2_\mathrm{scale} = \EE_{\hat{\mu}_{t_i}, \hat{\mu}_{t_{i+1}}} \left[ \frac{\| X_{t_{i+1}} - X_{t_i} \|_2^2}{2} \right].$
    \item Per-timepoint scaling factors: $\eta^2_\mathrm{scale} = \EE_{\hat{\mu}_{t_i}, \hat{\mu}_{t_i}} \left[ \frac{ \| X_{t_i} - Y_{t_i} \|_2^2}{2} \right].$
\end{itemize}
The cost function for each transport term in \eqref{eq:ROT} was divided by $\sigma^2_\mathrm{scale}$ so as to be of order one. Similarly, the pairwise squared distances in the data-fitting functional \eqref{eq:fit} were divided by $\eta^2_\mathrm{scale}$. The timepoints were mapped to $0 = t_1 \leq \ldots \leq t_T = 1$ and the value of $\tau$ was chosen such that the effective transport regularization level ($\tau_i$ in \eqref{eq:ROT}) was 0.1 for transport over 0.5-day interval. We applied MFL dynamics for $\lambda \in \{ 0.0125, 0.025, 0.05, 0.1, 0.2 \}$ and other parameters as in Section~\ref{sec:fig1_example}, and gWOT for $\lambda \in \{ 0.000625, 0.00125 , 0.0025  , 0.005   , 0.01  \}$. Of these parameter values, we found that $\lambda = 0.025$ performed best for MFL in terms of Energy Distance to the full dataset (projected onto the previously calculated principal components) and similarly $\lambda = 0.01$ for gWOT.

\end{document}